\newcommand{\ip}[2]{\langle #1, #2 \rangle}
\newcommand{\abs}[1]{|#1|}
\newcommand{\norm}[1]{||#1||}
\newcommand{\ls}{\mathcal{LS}}
\newcommand{\bds}{(BD) }
\newcommand{\bd}{(BD)}
\newcommand{\wbds}{(\rm{wBD}) }
\newcommand{\wbd}{(\rm{wBD})}
\newtheorem{theorem}{Theorem}[section]
\newtheorem*{theorem*}{Arnold Conjecture}
\newtheorem{lemma}[theorem]{Lemma}
\newtheorem{defi}[theorem]{Definition}
\newtheorem{prop}[theorem]{Proposition}
\newtheorem{cor}[theorem]{Corollary}
\DeclareMathOperator{\ch}{ch}
\DeclareMathOperator{\inv}{Inv}
\DeclareMathOperator{\interior}{int}
\DeclareMathOperator{\crit}{Crit}
\DeclareMathOperator{\CL}{CL}
\title{The $E$-cohomological Conley Index, Cup-Lengths and the Arnold Conjecture on $T^{2n}$}
\author{Maciej Starostka and Nils Waterstraat}
\begin{document}
\date{}
\maketitle

\footnotetext[1]{{\bf 2010 Mathematics Subject Classification: Primary 37J10; Secondary 53D40, 58E05}}

\begin{abstract}
\noindent  
We give a new proof of the strong Arnold conjecture for $1$-periodic solutions of Hamiltonian systems on tori, that was first shown by C. Conley and E. Zehnder in 1983. Our proof uses other methods and is shorter than the previous one. We first show that the $E$-cohomological Conley index, that was introduced by the first author recently, has a natural module structure. This yields a new cup-length and a lower bound for the number of critical points of functionals. Then an existence result for the $E$-cohomological Conley index, which applies to the setting of the Arnold conjecture, paves the way to a new proof of it on tori.    
\end{abstract}

\section{Introduction}
Motivated by questions of celestial mechanics from the beginning of the 20th century, Arnold conjectured in the sixties that every Hamiltonian diffeomorphism on a compact symplectic manifold $(M,\omega)$ has at least as many fixed points as a function on $M$ has critical points. Let us recall that a diffeomorphism $\psi:M\rightarrow M$ is called Hamiltonian if there exists a smooth map $H:\mathbb{R}\times M\rightarrow\mathbb{R}$, $H(t+1,x)=H(t,x)$, such that $\psi=\eta^1$, where the family $\{\eta^t\}_{t\in\mathbb{R}}$ satisfies

\begin{equation}\label{flow}
\left\{
\begin{aligned}
\frac{d}{dt}\eta^t&=X_H(\eta^t)\\
\eta^0&=id,
\end{aligned}
\right.
\end{equation}
and $X_H$ stands for the time-dependent vector field given by

\[dH(\cdot)=\omega(X_H,\cdot).\]   
Consequently, $p$ is a fixed point of $\psi$ if and only if it is the initial condition of a $1$-periodic solution of \eqref{flow}, and so Arnold's famous conjecture can be reformulated dynamically as follows.

\begin{theorem*}
The Hamiltonian system

\begin{align}\label{Arnold}
\dot{x}(t)=X_H(x(t))
\end{align}
has at least as many $1$-periodic orbits as a function on $M$ has critical points.
\end{theorem*}
\noindent
The aim of this paper is to point out a new approach to the Arnold conjecture which proves it on tori, where it was first shown by C. Conley and A. Zehnder in \cite{ConleyZehnder}. It will be future work to investigate if our methods also apply to cases where the conjecture is still open. However, let us point out that, apart from these important applications, our methods are of independent interest and can be outlined as follows.\\
In \cite{Maciej} the first author introduced the $E$-cohomological Conley index for isolated invariant sets of flows in Hilbert spaces. Roughly speaking, it is a generalisation of the classical Conley index for flows on locally compact spaces by using $E$-cohomology, which is a generalised cohomology theory for subsets of Hilbert spaces that was constructed by Abbondandolo in \cite{Alberto} (cf. also \cite{Geba}). The first aim of this paper is to introduce a module structure for the $E$-cohomological Conley index, which allows us to define a relative cup-length for triples of closed and bounded subsets of Hilbert spaces. Secondly, we consider this numerical invariant for isolating neighbourhoods of $\ls$-flows in Hilbert spaces (cf. \cite{Marek}, \cite{Marcin}), and show that it is a lower bound for the number of critical points of gradient flows as in classical Ljusternik-Schnirelman theory. Here we substantially use the homotopy invariance of the $E$-cohomological Conley index that was recently obtained by the first author in a joint work with Izydorek, Rot, Styborski and Vandervorst in \cite{Invariance}. Thirdly, we introduce a Conley index for unbounded isolating neighbourhoods of $\ls$-flows, and prove a sufficient condition for its existence. To the best of our knowledge, no such result has been obtained before in the literature. Finally, and most important, we show that this sufficient condition is satisfied when dealing with the functionals in the setting of the Arnold conjecture on $T^{2n}$. This yields an estimate from below for the number of contractible $1$-periodic solutions of \eqref{Arnold} by one of our previous results, and the obtained bound is indeed the one that Arnold conjectured. Let us point out that our proof of the Arnold conjecture not only differs substantially from Conley and Zehnder's, but it is also much shorter. A key step in our argument is to show that we can use the homotopy invariance of the $E$-cohomological Conley index from \cite{Invariance} to deform the Hamiltonian to a constant function which simplifies the computations considerably.\\
This paper is organised with the intention of guiding the reader through our proof of the Arnold conjecture in as straightforward a manner as possible. Therefore, in the second section, we only introduce the material that is necessary to understand the basics of our approach and postpone more technical proofs to Section 4. Our discussion of the Arnold conjecture can be found in between, in the third section.

\subsubsection*{Acknowledgements}
We would like to thank Kazimierz G\c{e}ba and Marek Izydorek for many inspiring discussions, as well as Thomas Schick for clarifying remarks about our groups $H^\ast_0(X)$.


\section{The $E$-Cohomological Conley Index and Cup-Lengths}

\subsection{Module Structure for $E$-Cohomology}

We begin this section by recalling the definition of $E$-cohomology from \cite{Alberto}, and to this aim we let $E$ be a separable real Hilbert space and $E^+$, $E^-$ closed subspaces such that $E=E^+\oplus E^-$. We endow $E^+$ with the weak topology, $E^-$ with the strong topology and henceforth we consider $E$ with the corresponding product topology.\\ 
In what follows we denote by $H^\ast$ Alexander-Spanier cohomology with compact supports, for which we refer to \cite{Spanier} and the nice survey in \cite[\S 1]{Alberto}. Moreover, we let $\mathcal{V}$ be the set of all finite dimensional subspaces of $E^-$, which is partially ordered by inclusion and directed.\\
If $U,V,W\in\mathcal{V}$ are such that $W=V\oplus U$ and $\dim(U)=1$, then we can decompose $W$ into two subspaces by setting

\begin{align*}
W^+&=\{w\in W:\, \langle w,u\rangle\geq 0\}\\
W^-&=\{w\in W:\, \langle w,u\rangle\leq 0\},
\end{align*}
where $u\neq 0$ is a fixed element in $U$. Note that the choice of $u$ corresponds to an orientation of the one-dimensional space $U$, and changing this orientation swaps $W^+$ and $W^-$.\\
We set for a closed and bounded subset $X$ of $E$

\[X_W=X\cap(E^+\times W),\quad X^+_W=X\cap(E^+\times W^+),\quad X^-_W=X\cap(E^+\times W^-)\]   
and note that $X_W=X^+_W\cup X^-_W$ as well as $X_V:=X\cap(E^+\times V)=X^+_W\cap X^-_W$. 
\begin{center}\includegraphics[height=7cm]{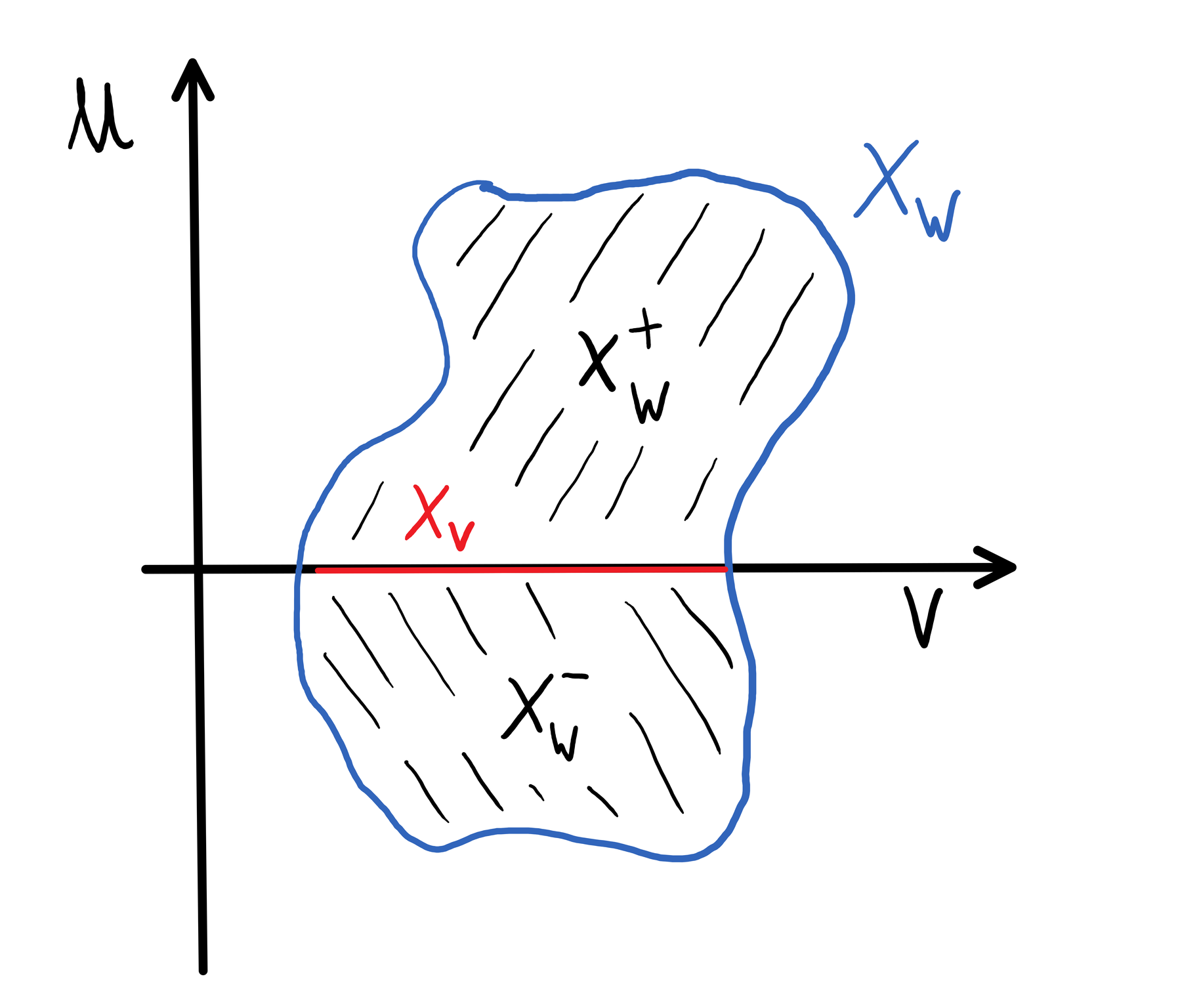}\end{center}
If now $A\subset X$ is closed, then we obtain a relative Meyer-Vietoris sequence

\begin{align*}
\ldots&\rightarrow H^k(X^+_W,A^+_W)\oplus H^k(X^-_{W},A^-_W)\rightarrow H^k(X_{V},A_V)\xrightarrow{\Delta^k_{V,W}}\\ 
&\rightarrow H^{k+1}(X_{W},A_W)\rightarrow H^{k+1}(X^+_W,A^+_W)\oplus H^{k+1}(X^-_W,A^-_W)\rightarrow\ldots.
\end{align*} 
In the more general case that $W=V\oplus U$ and $\dim(U)=n>0$, we decompose $U$ into $n$ one-dimensional subspaces $U=U_1\oplus\dots\oplus U_n$ and set $W_i=V\oplus U_1\oplus\dots\oplus U_i$ for $1\leq i\leq n$ as well as $W_0=V$. Then the previous construction yields $n$ Mayer-Vietoris homomorphisms

\[\Delta^{k+i-1}_{W_{i-1},W_i}:H^{k+i-1}(X_{W_{i-1}},A_{W_{i-1}})\rightarrow H^{k+i}(X_{W_{i}},A_{W_{i}})\]
and their composition is a homomorphism $H^k(X_V,A_V)\rightarrow H^{k+n}(X_W,A_W)$. Hence we have constructed for any $q\in\mathbb{Z}$ and $V,W\in\mathcal{V}$, $V\subset W$, a homomorphism

\[\Delta^q_{V,W}(X):H^{q+\dim(V)}(X_V,A_V)\rightarrow H^{q+\dim(W)}(X_W,A_W).\]
As noted in \cite[Prop. 2.2]{Alberto}, these maps do not depend on the choice of the one-dimensional subspaces $U_i$ and their orientations. 
In summary, $\{H^{q+\dim(V)}(X_V,A_V),\Delta^q_{VW}(X,A)\}$ is a direct system of abelian groups over the directed set $\mathcal{V}$.

\begin{defi}
Let $A\subset X$ be closed and bounded subsets of $E$. The $E$-cohomology group of index $q\in\mathbb{Z}$ of $(X,A)$ is the direct limit

\[H^q_E(X,A)=\lim_{\stackrel{\longrightarrow}{V\in\mathcal{V}}}\{H^{q+\dim(V)}(X_V,A_V),\Delta^q_{V,W}(X,A)\},\]
and we set as usual $H^q_E(X):=H^q_E(X,\emptyset)$. 
\end{defi}
\noindent
The inclusions $\iota_{V,W}:X_V\rightarrow X_{W}$ for $V,W\in\mathcal{V}$ yield an inverse system $\{H^p(X_V),\iota^\ast_{V,W}\}$ over $\mathcal{V}$. We define for $p\in\mathbb{Z}$ the group $H^p_0(X)$ as the inverse limit
\[H^p_0(X):=\lim_{\stackrel{\longleftarrow}{V\in\mathcal{V}}}\{H^p(X_V),\iota^\ast_{V,W}\}.\]
In what follows, we denote elements of $H^p_0(X)$ by $[\alpha_V]_0$ if $\alpha_V\in H^p(X_V)$, and correspondingly elements of $H^q_E(X,A)$ by $[\alpha_V]_E$ if $\alpha_V\in H^{q+\dim(V)}(X_V,A_V)$.\\
Let us point out that $H^\ast_0(X)$ is a ring if we define the product of $[\alpha_V]_0\in H^p_0(X)$ and $[\beta_V]_0\in H^q_0(X)$ by

\[[\alpha_V]_0\cup [\beta_V]_0=[\alpha_{V}\cup\beta_{V}]_0\in H^{p+q}_0(X).\]
It is readily seen from the naturality of the cup product that this is a sensible definition. 


\begin{prop}\label{prop-module}
$H^\ast_E(X,A)$ is a right module over $H^\ast_0(X)$, where the module multiplication is induced by the cup product. 
\end{prop}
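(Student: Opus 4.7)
The plan is to define the module action representative-by-representative and then check that it descends to both limits. For $[\beta]_0\in H^p_0(X)$ with compatible family $(\beta_V)_{V\in\mathcal{V}}$ (so $\iota^\ast_{V,W}\beta_W=\beta_V$ for $V\subset W$) and $[\alpha_V]_E\in H^q_E(X,A)$ represented by $\alpha_V\in H^{q+\dim V}(X_V,A_V)$, I set
\[
[\alpha_V]_E\cup[\beta]_0:=[\alpha_V\cup\beta_V]_E\in H^{p+q}_E(X,A),
\]
where on the right we use the ordinary relative cup product on the pair $(X_V,A_V)$ paired with the absolute cohomology of $X_V$.

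The key step, and the main expected obstacle, is independence of the representative $\alpha_V$: if $W\supset V$ and one replaces $\alpha_V$ by $\Delta^q_{V,W}(\alpha_V)$, the class $[\Delta^q_{V,W}(\alpha_V)\cup\beta_W]_E$ must coincide with $[\alpha_V\cup\beta_V]_E$. By the inductive one-dimensional construction of $\Delta^q_{V,W}$, it suffices to treat the case $\dim W=\dim V+1$, where one has to verify
\[
\Delta^{p+q}_{V,W}(\alpha_V\cup\beta_V)=\Delta^q_{V,W}(\alpha_V)\cup\beta_W.
\]
This is a Leibniz-type identity for the Mayer--Vietoris connecting map of the closed cover $X_W=X^+_W\cup X^-_W$ with $X^+_W\cap X^-_W=X_V$: for any absolute class $\gamma\in H^p(X_W)$ and any relative class $\alpha\in H^k(X_V,A_V)$, one has $\Delta(\alpha\cup\iota^\ast_{V,W}\gamma)=\Delta(\alpha)\cup\gamma$. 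Applied with $\gamma=\beta_W$, and using $\iota^\ast_{V,W}\beta_W=\beta_V$, this yields exactly the required formula. The identity itself is proved at the Alexander--Spanier cochain level: a cocycle representative of $\beta_W$ is already closed on all of $X_W$, so lifting $\alpha\cup\beta_V$ through the Mayer--Vietoris short exact sequence and computing the coboundary only sees $\Delta(\alpha)$ in the first factor, the $\beta$-factor being carried along unchanged.

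Once well-definedness is in hand, the module axioms are formal. Bi-additivity is inherited level-by-level. Associativity $([\alpha]_E\cup[\beta]_0)\cup[\gamma]_0=[\alpha]_E\cup([\beta]_0\cup[\gamma]_0)$ reduces to the facts that the compatible family for $[\beta]_0\cup[\gamma]_0$ is $(\beta_V\cup\gamma_V)$ and that the cup product on each $H^\ast(X_V,A_V)$ is associative over $H^\ast(X_V)$; and the unit $1\in H^0_0(X)$, represented by the constant family of the units $1_{X_V}\in H^0(X_V)$, acts trivially at every level. Thus the only substantive input is the Mayer--Vietoris/cup-product compatibility invoked above, which is classical for open covers and needs to be checked once in the closed-cover setting of \cite{Alberto} used here.
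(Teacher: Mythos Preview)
Your proposal is correct and follows essentially the same route as the paper: define the action levelwise by the relative cup product and verify well-definedness via the identity $\Delta^{p+q}_{V,W}(\alpha_V\cup\iota^\ast_{V,W}\beta_W)=(\Delta^q_{V,W}\alpha_V)\cup\beta_W$. The only difference is that the paper invokes this Mayer--Vietoris/cup-product compatibility by citing \cite[Prop.~17.2.1]{Tammo}, whereas you sketch the cochain-level argument and are more explicit about reducing to the codimension-one step and verifying the remaining module axioms.
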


\begin{proof}
We define for $[\alpha_V]_0\in H^r_0(X)$ and $[\beta_V]_E\in H^q_E(X,A)$

\[[\beta_V]_E\cup[\alpha_V]_0:=[\beta_V\cup\alpha_V]_E\in H^{q+r}_E(X,A).\]
This product is well defined, as if $\beta_W=\Delta^q_{V,W}\beta_V$ and $\alpha_V=\iota^\ast_{V,W}\alpha_W$, then

\[\Delta^{q+r}_{V,W}(\beta_V\cup\alpha_V)=\Delta^{q+r}_{V,W}(\beta_V\cup\iota^\ast_{V,W}\alpha_W)=(\Delta^q_{V,W}\beta_V)\cup\alpha_W=\beta_W\cup\alpha_W,\]
where we have used that the coboundary operators of the Mayer-Vietoris sequence commute with products in multiplicative cohomology theories (cf. \cite[Prop. 17.2.1]{Tammo}). 
\end{proof}
\noindent
Let now $\Omega\subset E$ be closed and bounded and such that $X\subset\Omega$. The inclusions $j_V:X_V\rightarrow\Omega_V$ induce homomorphisms $j^\ast_V:H^p(\Omega_V)\rightarrow H^p(X_V)$ for $V\in\mathcal{V}$, and it is readily seen that they actually yield a ring homomorphism 

\[j^\ast:H^\ast_0(\Omega)\rightarrow H^\ast_0(X).\]
Consequently, we obtain the following corollary from Proposition \ref{prop-module}.





\begin{cor}
For every $X\subset\Omega\subset E$, $H^\ast_E(X,A)$ is a right $H^\ast_0(\Omega)$-module.
\end{cor}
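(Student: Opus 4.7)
The plan is to pull back the module structure from Proposition \ref{prop-module} along the ring homomorphism $j^\ast:H^\ast_0(\Omega)\to H^\ast_0(X)$. More precisely, given $[\alpha_V]_0\in H^r_0(\Omega)$ and $[\beta_V]_E\in H^q_E(X,A)$, I would define
\[
[\beta_V]_E\cup[\alpha_V]_0:=[\beta_V]_E\cup j^\ast[\alpha_V]_0\in H^{q+r}_E(X,A),
\]
where the cup product on the right is the one constructed in Proposition \ref{prop-module}. This makes sense because $j^\ast[\alpha_V]_0$ is an element of $H^r_0(X)$ by definition of $j^\ast$, so the right-hand side is a well-defined element of $H^{q+r}_E(X,A)$.

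The axioms for a right module structure, namely bi-additivity, associativity with respect to the cup product on $H^\ast_0(\Omega)$, and compatibility with the unit, then reduce to the corresponding statements for the $H^\ast_0(X)$-module structure already established in Proposition \ref{prop-module}, together with the fact that $j^\ast$ is a ring homomorphism. Concretely, for $[\alpha_V]_0,[\gamma_V]_0\in H^\ast_0(\Omega)$ one computes
\[
([\beta_V]_E\cup[\alpha_V]_0)\cup[\gamma_V]_0=[\beta_V]_E\cup\bigl(j^\ast[\alpha_V]_0\cup j^\ast[\gamma_V]_0\bigr)=[\beta_V]_E\cup j^\ast\bigl([\alpha_V]_0\cup[\gamma_V]_0\bigr),
\]
using associativity from the proposition and multiplicativity of $j^\ast$; the compatibility with the unit $1\in H^0_0(\Omega)$ follows because $j^\ast$ preserves $1$.

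The only real point to verify before invoking the above is that $j^\ast$ is indeed a well-defined ring homomorphism between the inverse limits. For this one observes that for $V\subset W$ the obvious diagram
\[
\xymatrix{
H^p(\Omega_W)\ar[r]^{j^\ast_W}\ar[d]_{\iota^\ast_{V,W}} & H^p(X_W)\ar[d]^{\iota^\ast_{V,W}}\\
H^p(\Omega_V)\ar[r]^{j^\ast_V} & H^p(X_V)
}
\]
commutes by functoriality of ordinary cohomology, so the maps $j^\ast_V$ pass to the inverse limit; multiplicativity then follows from the naturality of the cup product on each stage. This is the only mildly technical step, and it is essentially formal, so I do not expect any genuine obstacle. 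The statement of the corollary is then immediate.
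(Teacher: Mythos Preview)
Your proposal is correct and follows exactly the paper's approach: the paper observes just before the corollary that the inclusions $j_V:X_V\to\Omega_V$ yield a ring homomorphism $j^\ast:H^\ast_0(\Omega)\to H^\ast_0(X)$, and then deduces the corollary immediately from Proposition~\ref{prop-module} by restriction of scalars. You have simply spelled out the details (commutativity of the restriction squares, multiplicativity, module axioms) that the paper leaves implicit.
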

\noindent
Henceforth we denote the module product of $\alpha\in H^r_0(\Omega)$ and $\beta\in H^p_E(X,A)$ by

\[\beta\cup\alpha\in H^{p+r}_E(X,A).\]
We conclude this section with the following crucial definition.

\begin{defi}\label{cuplength}
Let $A\subset X\subset\Omega$ be closed and bounded subsets of $E$. The \textbf{relative cup-length} $\CL(\Omega;X,A)$ is defined as follows:

\begin{itemize}
	\item If $H^\ast_E(X,A)=0$, we set
	
	\[\CL(\Omega;X,A)=0.\]
	\item If $H^\ast_E(X,A)\neq 0$ but $\beta\cup\alpha=0$ for every $\beta\in H^\ast_E(X,A)$ and $\alpha\in H^{>0}_0(\Omega)$, then we set
	
	\[\CL(\Omega;X,A)=1.\]
	
	\item If there are $k\geq 2$, $\beta_0\in H^\ast_E(X,A)$ and $\alpha_1, \alpha_2, \ldots, \alpha_{k-1} \in H^{>0}_0(\Omega)$ 
	such that 
	
	\[\beta_0\cup\alpha_1 \cup \ldots \cup \alpha_{k-1}\neq 0\] 
	but 
	
	\[\beta\cup \gamma_1 \cup \ldots \cup\gamma_{k}=0\]
	 for all $\beta\in H^\ast_E(X,A)$, $\gamma_1, \gamma_2, \ldots, \gamma_{k}
	\in H^{>0}_0(\Omega)$, then
	
	\[\CL(\Omega;X,A)=k.\] 
\end{itemize}
\end{defi}


\subsection{The $E$-Cohomological Conley Index and Critical Points}
The first aim of this section is to introduce the $E$-cohomological Conley index and to define a module structure for it. Let $E$ be a real separable Hilbert space and $L:E\rightarrow E$ an invertible selfadjoint operator for which there exists a sequence $\{E_n\}_{n\in\mathbb{N}}$ of finite dimensional subspaces of $E$ such that $L(E_n)=E_n$, $E_n\subset E_{n+1}$ and $\overline{\bigcup_{n\in\mathbb{N}}E_n}=E$. Let $U\subset E$ be open. Following \cite{Marek}, we call a vector field $F:U\subset E\rightarrow E$, $F(u)=Lu+K(u)$ an $\ls$-vector field if $K:U\subset E\rightarrow E$ is a locally Lipschitz compact operator. Note that every $\ls$-vector field generates a local flow $\eta^t$ satisfying

\[\frac{d}{dt}\eta^t=-F\circ\eta^t,\quad \eta^0=id,\]
which we call an $\ls$-flow. \\
Let us now assume that $\eta$ is an $\ls$-flow on $U$, and let us denote by

\[\inv(\Omega,\eta)=\{x\in\Omega:\,\eta^t(x)\in\Omega,\, t\in\mathbb{R}\}\]
the maximal $\eta$-invariant subset of $\Omega\subset U$.

\begin{defi}\label{isolnbhd}
A closed and bounded set $\Omega\subset U$ is called an \textit{isolating neighbourhood} of $\eta$ if $\inv(\Omega,\eta)\subset\interior(\Omega)$, where $\interior(\Omega)$ denotes the interior of $\Omega$. 
\end{defi}
\noindent
Let now $\Omega$ be an isolating neighbourhood of $\eta$ and $S:=\inv(\Omega,\eta)$.

\begin{defi}
We call a closed and bounded pair $(X,A)$ of subsets of $\Omega$ an \textit{index pair} for $S$ if

\begin{itemize}
	\item $A$ is positively invariant with respect to $X$, i.e. given $x\in A$ and $t>0$ with $\eta^{[0,t]}(x)\subset X$, then $\eta^{[0,t]}(x)\in A$.
	\item $S = \inv (\overline{X \setminus A},\eta) \subset \interior(\overline{X \setminus A})$,
	\item if $y\in X$, $t>0$ and $\eta(t,y)\notin X$, then there exists $t'>t$ such that $\eta^{[0,t']}(y)\subset X$ and $\eta(t',y)\in A$.
\end{itemize} 
\end{defi}
\noindent
It was shown in \cite[Lemma 2.7]{Invariance} that every isolated invariant set $S$ has an index pair.\\
Note that the space $E$ splits as $E=E^+\oplus E^-$, where $E^\pm$ are the spectral subspaces with respect to the positive and negative part of the spectrum of $L$. Henceforth, we let $H^\ast_E(X,A)$ be the E-cohomology groups with respect to this decomposition. The following crucial result was proved in \cite[Prop. 2.8]{Invariance}.

\begin{prop}\label{moduleiso}
If $\Omega$ is an isolating neighbourhood of $\eta$, $S=\inv(\Omega,\eta)$ and $(X,A)$, $(X',A')$ are index pairs for $S$ such that $X,X'\subset\Omega$, then the groups $H^\ast_E(X,A)$ and $H^\ast_E(X',A')$ are isomorphic.
\end{prop}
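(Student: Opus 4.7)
The plan is to adapt the classical Conley-index proof of index-pair independence to the present setting, where the cohomology is defined as a direct limit over finite-dimensional subspaces $V\in\mathcal{V}$. As a first reduction I would show that any two index pairs $(X,A)$ and $(X',A')$ for $S$ admit a common refinement: by a variant of the Mrozek--Salamon intersection construction one can build a third index pair $(Y,B)$ with $(Y,B)\subset (X,A)$ and $(Y,B)\subset (X',A')$, contained in $\Omega$. It therefore suffices to treat the case of a nested inclusion $(X',A')\subset (X,A)$ of index pairs for the same isolated invariant set $S$, and prove that it induces an isomorphism on $H^\ast_E$.

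For the nested case I would fix $V\in\mathcal{V}$ and analyse the inclusion $(X'_V,A'_V)\hookrightarrow(X_V,A_V)$ at the level of Alexander--Spanier cohomology. Because $S=\inv(\Omega,\eta)\subset\interior(\overline{X'\setminus A'})$, every point $x\in X\setminus X'$ is forced by the flow to either enter $A$ or leave $X$ in finite time; the standard argument then produces a continuous exit-time $\tau\colon X\to[0,\infty)$ such that $\eta^{\tau(\cdot)}$ furnishes a strong deformation retraction from $(X_V,A_V)$ onto $(X'_V,A'_V)$ in the weak-strong product topology on $E^+\oplus V$. This gives $H^{q+\dim V}(X_V,A_V)\cong H^{q+\dim V}(X'_V,A'_V)$ for every $V$. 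Checking that these slice-wise isomorphisms are natural with respect to enlargements $V\subset W$, so that they commute with the Mayer--Vietoris coboundary maps $\Delta^q_{V,W}$, assembles them into an isomorphism of the direct systems and hence of their direct limits $H^\ast_E(X,A)\cong H^\ast_E(X',A')$. Naturality here should follow from the compatibility of the flow with the orthogonal projections $E\to E^+\times V$ and from the naturality of the Mayer--Vietoris sequence with respect to inclusions of pairs.

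The main obstacle, in my view, is proving continuity of $\tau$ and of the resulting deformation in the hybrid weak-strong topology: the usual strong-topology argument is not directly available because $E^+$ carries the weak topology. The saving structure is that the $\ls$-vector field has the form $F=L+K$ with $K$ locally Lipschitz and compact, so that the nonlinear part of the flow is sequentially weakly continuous on bounded sets; combined with the $L$-invariance of the subspaces $E_n$ this should be enough to recover continuity of $\tau$ on each isolating neighbourhood and to show that the flow-retraction descends to the finite-dimensional slices $X_V\subset E^+\times V$. Making this precise, together with the verification that the slice-wise retractions commute with the maps $\Delta^q_{V,W}$ up to homotopy, is the technical core of the argument and is, I expect, where the bulk of the work in \cite{Invariance} is devoted.
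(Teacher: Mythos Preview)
The paper does not prove this proposition; it is quoted directly from \cite[Prop.~2.8]{Invariance}, so there is no in-paper argument to compare your proposal against. Your outline is a plausible reconstruction of the classical Conley-index strategy, and you yourself anticipate that the technical work resides in the cited reference.

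One step in your sketch, however, is more than a routine verification. The $\ls$-flow $\eta^t$ is generated by $F=L+K$ with $K$ compact but not of finite rank, so $\eta^t$ does \emph{not} preserve the slices $E^+\times V$ for $V\in\mathcal{V}$: a point $x\in X_V$ flowed for time $\tau(x)$ will in general leave $E^+\times V$ entirely, and hence $\eta^{\tau(\cdot)}$ cannot literally furnish a deformation retraction of $(X_V,A_V)$ onto $(X'_V,A'_V)$ as you claim. Your phrase ``the flow-retraction descends to the finite-dimensional slices'' therefore conceals the central difficulty rather than a secondary technicality. A workable argument must either (i) operate with the full pairs $(X,A)$ and $(X',A')$ in the weak--strong topology---where closed bounded sets are compact, so the usual flow-defined maps and homotopies are available---and then invoke functoriality and homotopy-invariance properties of $H^\ast_E$ as established in \cite{Alberto} and \cite{Invariance}, or (ii) replace the true flow by finite-dimensional approximating flows on $E^+\times V$ and control the discrepancy. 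Neither of these is what you have written, and the $L$-invariance of the filtration $\{E_n\}$ that you appeal to does not by itself repair the problem, since the compact perturbation $K$ need not respect that filtration.
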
 
\noindent
Hence the next definition is sensible (cf. \cite[Def. 2.9]{Invariance}).

\begin{defi}
The \textit{$E$-cohomological Conley index} of $S$ is defined by

\[\ch_E(S)=H^\ast_E(X,A),\]
where $(X,A)$ is an index pair for $S$.
\end{defi}
\noindent
If we want to emphasize the isolating neighbourhood $\Omega$ instead of the isolated invariant set $S$, we will also write $\ch_E(\Omega)$ to denote the $E$-cohomological Conley index.\\
When taking the module structure from \S 2 into account, it is readily seen by arguing as in \cite[Prop. 2.8]{Invariance} that $H^\ast_E(X,A)$ and $H^\ast_E(X',A')$ are actually isomorphic as $H^\ast_0(\Omega)$-modules. Hence we obtain as a consequence of Proposition \ref{moduleiso} the following important result.

\begin{cor}\label{cor-moduleiso}
The cup-length $\CL(\Omega;X,A)$ does not depend on the choice of the index pair $(X,A)$ such that $X\subset\Omega$.
\end{cor}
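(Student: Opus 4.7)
The plan is to reduce the statement to a purely algebraic one: the relative cup-length $\CL(\Omega;X,A)$ depends only on $H^\ast_E(X,A)$ as an $H^\ast_0(\Omega)$-module, and then to invoke the module-theoretic refinement of Proposition \ref{moduleiso} asserted in the paragraph preceding the corollary.

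First, I would fix two index pairs $(X,A)$ and $(X',A')$ for $S=\inv(\Omega,\eta)$ with $X,X'\subset\Omega$. By the discussion just before the statement, the isomorphism produced in \cite[Prop. 2.8]{Invariance} can be upgraded, by keeping track of cup products at every step, to an $H^\ast_0(\Omega)$-module isomorphism
\[
\Phi:H^\ast_E(X,A)\xrightarrow{\ \cong\ } H^\ast_E(X',A').
\]
In particular, for every $\beta\in H^\ast_E(X,A)$ and every $\alpha\in H^\ast_0(\Omega)$ one has $\Phi(\beta\cup\alpha)=\Phi(\beta)\cup\alpha$.

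Next, I would verify that each clause of Definition \ref{cuplength} is invariant under such a $\Phi$. The case $H^\ast_E(X,A)=0$ is immediate, since $\Phi$ being an isomorphism gives $H^\ast_E(X',A')=0$ as well. For $k\geq 1$, if there exist $\beta_0\in H^\ast_E(X,A)$ and $\alpha_1,\dots,\alpha_{k-1}\in H^{>0}_0(\Omega)$ with
\[
\beta_0\cup\alpha_1\cup\cdots\cup\alpha_{k-1}\neq 0,
\]
then applying $\Phi$ and using module-linearity gives
\[
\Phi(\beta_0)\cup\alpha_1\cup\cdots\cup\alpha_{k-1}=\Phi(\beta_0\cup\alpha_1\cup\cdots\cup\alpha_{k-1})\neq 0,
\]
so the same nonvanishing holds in $H^\ast_E(X',A')$. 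Conversely, injectivity of $\Phi$ together with the analogous identity shows that vanishing of all $(k+1)$-fold products in $H^\ast_E(X,A)$ transfers to $H^\ast_E(X',A')$. Running both implications symmetrically (using $\Phi^{-1}$, which is also a module map) yields $\CL(\Omega;X,A)=\CL(\Omega;X',A')$.

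I do not expect a real obstacle here; the only point requiring care is the module-theoretic enhancement of Proposition \ref{moduleiso}, but this is precisely what the sentence before the corollary asserts, and the argument above shows that module-isomorphism is exactly the right level of structure to preserve Definition \ref{cuplength}.
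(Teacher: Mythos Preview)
Your proposal is correct and follows exactly the approach the paper intends: the paper does not give a separate proof but derives the corollary directly from the observation (stated just above it) that $H^\ast_E(X,A)$ and $H^\ast_E(X',A')$ are isomorphic as $H^\ast_0(\Omega)$-modules, and your argument simply spells out why an $H^\ast_0(\Omega)$-module isomorphism preserves each clause of Definition~\ref{cuplength}.
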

\noindent
Consequently, we can define

\[\CL(\Omega,S):=\CL(\Omega;X,A),\]
where $(X,A)$ is any index pair such that $X\subset\Omega$. As $S$ is uniquely determined by $\Omega$ and the flow $\eta$, we will sometimes denote this cup-length by $\CL(\Omega,\eta)$ if we want to emphasize $\eta$.\\
Before we come to the main theorem of this section, we want to state the following important result for later reference. It follows from the corresponding Theorem 2.12 in \cite{Invariance} when taking the module structure of $H^\ast_E$ into account.

\begin{theorem}\label{continuation}
If $\{\eta_\lambda, \lambda \in [0,1]\}$ is a continuous family of $\ls$-flows on $U$ such that $\Omega\subset U$ is an isolating neighbourhood of $\eta_\lambda$ for every $\lambda \in [0,1]$, then 

\[\CL(\Omega,\eta_0)=\CL(\Omega,\eta_1).\]
\end{theorem}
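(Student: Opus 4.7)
The plan is to bootstrap Theorem 2.12 of \cite{Invariance} (which yields an isomorphism of abelian groups $\ch_E(\Omega,\eta_0)\cong\ch_E(\Omega,\eta_1)$) to an isomorphism of $H^\ast_0(\Omega)$-modules, and then to invoke the fact that $\CL(\Omega;X,A)$ depends only on this module structure. The point is that the relative cup-length from Definition \ref{cuplength} is a purely algebraic invariant of the pair $(H^\ast_0(\Omega),H^\ast_E(X,A))$, so any isomorphism of $H^\ast_0(\Omega)$-modules between two realisations of the $E$-cohomological Conley index automatically equates their cup-lengths.

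The first step is to reduce, via compactness of $[0,1]$, to the local situation. As in \cite{Invariance}, for each $\lambda_0\in[0,1]$ one can find an open neighbourhood $I_{\lambda_0}\subset[0,1]$ and a closed bounded pair $(X,A)$ with $X\subset\Omega$ such that $(X,A)$ is an index pair for $\eta_\lambda$ for all $\lambda\in I_{\lambda_0}$. By covering $[0,1]$ with finitely many such intervals, the theorem reduces to two elementary assertions: first, that if a single pair $(X,A)$ is an index pair for $\eta_\lambda$ for all $\lambda$ in a subinterval, then the cup-length is constant on that subinterval; second, that on the overlap of two consecutive subintervals, the two possibly different index pairs give the same cup-length.

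The first assertion is immediate because $H^\ast_E(X,A)$, the ring $H^\ast_0(\Omega)$, and the module multiplication $\beta\cup\alpha$ from Proposition \ref{prop-module} and its corollary are all defined in terms of $(X,A)$ and $\Omega$ alone, with no reference to the flow. The second assertion is exactly the content of Corollary \ref{cor-moduleiso}: the authors have already observed that the isomorphism in Proposition \ref{moduleiso} is in fact an $H^\ast_0(\Omega)$-module isomorphism, by running the argument of \cite[Prop.~2.8]{Invariance} and noting that all the comparison maps at each step are induced by inclusions of pairs inside $\Omega$, hence are $H^\ast_0(\Omega)$-module homomorphisms by naturality of the cup product.

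The only genuine step is therefore the local existence of a common index pair, which is precisely what Theorem 2.12 of \cite{Invariance} provides; this is the one place where the specific dynamics (i.e. continuity of $\lambda\mapsto\eta_\lambda$ in the $\ls$-flow sense) enters. I expect this to be the main obstacle in principle, but here it is imported as a black box. Once the local common index pair is produced, the module structure passes through formally, and chaining the equalities $\CL(\Omega,\eta_{\lambda_i})=\CL(\Omega,\eta_{\lambda_{i+1}})$ along the finite cover yields $\CL(\Omega,\eta_0)=\CL(\Omega,\eta_1)$.
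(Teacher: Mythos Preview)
Your proposal is correct and follows exactly the approach the paper indicates: the paper's own proof is just the one-line remark that the theorem ``follows from the corresponding Theorem 2.12 in \cite{Invariance} when taking the module structure of $H^\ast_E$ into account,'' and you have faithfully unpacked this into the standard compactness-plus-local-common-index-pair argument, observing along the way that all comparison maps are $H^\ast_0(\Omega)$-module maps so that Corollary~\ref{cor-moduleiso} applies on overlaps. If anything, your write-up is more detailed than what the paper itself provides.
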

\noindent
Let us now assume that $\eta$ is the gradient flow with respect to a differentiable functional $f:U\rightarrow\mathbb{R}$, i.e. the map $F:U\subset E\rightarrow E$ is of the form $F=\nabla f$. Let $\Omega$ be an isolating neighbourhood of $\eta$ and $S=\inv(\Omega,\eta)$. We denote by $\crit(f,\Omega)$ the set of critical values of $f\mid_{\Omega}$ and can now state the first important result of this paper.

\begin{theorem}\label{thm-estimate} If $f$ has only finitely many critical points in $\Omega$, then the number of critical values of $f\mid_{\Omega}$ is bounded below by the cup-length of $\Omega$ with respect to $S$, i.e.
\begin{align}\label{CLestimate}
\#\crit(f,\Omega)\geq\CL(\Omega,S).
\end{align}
\end{theorem}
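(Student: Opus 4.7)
The plan is to adapt the classical Ljusternik-Schnirelman cup-length argument to the Conley index setting by filtering the index pair of $S$ according to the critical values of $f|_{\Omega}$. Write $c_1<c_2<\cdots<c_n$ for the distinct critical values of $f|_\Omega$ and choose regular values $a_0<c_1<a_1<c_2<\cdots<a_{n-1}<c_n<a_n$. Setting $S_i := S\cap f^{-1}((-\infty,a_i])$ gives isolated invariant sets with $S_0=\emptyset$, $S_n=S$, and $K_i := S_i\setminus S_{i-1}$ is the finite critical set at level $c_i$. Using the existence of index pairs \cite[Lemma 2.7]{Invariance} applied iteratively to this nested family, I would construct an index pair $(X,A)$ for $S$ together with a filtration
\[
 A = X_0 \subset X_1 \subset \cdots \subset X_n = X
\]
in which each $(X_i,X_{i-1})$ is an index pair for $K_i$.

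The crucial observation is that for every $\alpha\in H^{>0}_0(\Omega)$, the right module multiplication by $\alpha$ is zero on each $H^\ast_E(X_i,X_{i-1})$. Since the comparison map between any two index pairs for a fixed isolated invariant set is an isomorphism of $H^\ast_0(\Omega)$-modules, as noted just before Corollary~\ref{cor-moduleiso}, it is enough to verify this triviality for \emph{some} index pair of $K_i$. Because $K_i$ is finite, it admits an arbitrarily small isolating neighbourhood $N_i$ that is a disjoint union of convex balls around its points, and \cite[Lemma 2.7]{Invariance} yields an index pair $(N_i,N'_i)$ for $K_i$ inside $N_i$. Each finite-dimensional slice $(N_i)_V = N_i\cap(E^+\times V)$ is then a disjoint union of contractible sets, so $H^{>0}((N_i)_V)=0$, and passing to the inverse limit gives $H^{>0}_0(N_i)=0$. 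Consequently the restriction of $\alpha$ to $H^\ast_0(N_i)$ vanishes and its action on $H^\ast_E(N_i,N'_i)$ is zero, which, via the module isomorphism, transfers to $H^\ast_E(X_i,X_{i-1})$.

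I would then show, by induction on $i$, that for any $\beta_0\in H^\ast_E(X,A)$ and any $\alpha_1,\ldots,\alpha_i\in H^{>0}_0(\Omega)$, the partial product $\beta_0\cup\alpha_1\cup\cdots\cup\alpha_i$ lies in the image of the inclusion-induced map $H^\ast_E(X,X_i)\to H^\ast_E(X,X_0)=H^\ast_E(X,A)$. The case $i=0$ is tautological. If $\tilde\gamma_i\in H^\ast_E(X,X_i)$ is a lift, then by naturality of the cup product $\tilde\gamma_i\cup\alpha_{i+1}\in H^\ast_E(X,X_i)$ still maps onto the $(i+1)$-fold product. To promote it to a class in $H^\ast_E(X,X_{i+1})$ I would invoke the long exact cohomology sequence of the triple $(X,X_{i+1},X_i)$: the obstruction is the image of $\tilde\gamma_i\cup\alpha_{i+1}$ in $H^\ast_E(X_{i+1},X_i)$, and naturality of the cup product identifies this image with $(\tilde\gamma_i|_{(X_{i+1},X_i)})\cup\alpha_{i+1}$, which vanishes by the previous paragraph. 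At $i=n$ the lift lives in $H^\ast_E(X,X_n)=H^\ast_E(X,X)=0$, so $\beta_0\cup\alpha_1\cup\cdots\cup\alpha_n = 0$ in $H^\ast_E(X,A)$. By Definition~\ref{cuplength} this forces $\CL(\Omega,S)\le n$, which is exactly \eqref{CLestimate}.

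The hardest ingredient is the filtered index pair construction for $\ls$-flows: one has to arrange simultaneously positive invariance of each $X_i$ inside $X_{i+1}$, isolation of each piece $K_i$, and compatibility with the critical-value decomposition of $S$. Once this and the module-compatibility of the comparison isomorphism between different index pairs of the same isolated invariant set are available, the cohomological lifting argument is purely formal.
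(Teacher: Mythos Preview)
Your proposal is correct and follows the same overall strategy as the paper: build a Morse filtration of an index pair for $S$ along the critical values, use the module isomorphism between index pairs together with a small disjoint-union-of-balls isolating neighbourhood to see that $H^{>0}_0(\Omega)$ acts trivially on each $H^\ast_E(X_i,X_{i-1})$, and conclude the cup-length bound.

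The differences are in packaging only. First, the paper makes the filtration completely explicit by setting $X_i=X\cap f^{-1}(-\infty,b_i]$ for regular values $b_i\in(c_i,c_{i+1})$; this immediately handles what you call ``the hardest ingredient'' without any iterated application of the abstract existence lemma. Second, instead of your hands-on lifting through the long exact sequence of the triple $(X,X_{i+1},X_i)$, the paper invokes two formal properties of the relative cup-length, namely subadditivity $\CL(Y;X,B)\le\CL(Y;X,A)+\CL(Y;A,B)$ and monotonicity in the ambient set, both quoted from \cite{Uss}. Applying subadditivity $k$ times gives $\CL(\Omega;X,A)\le\sum_i\CL(\Omega;X_i,X_{i-1})$, and then the module isomorphism plus monotonicity force each summand to be at most $1$. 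Your inductive lifting argument is precisely what underlies the proof of the subadditivity lemma, so the two arguments are equivalent; the paper's version is shorter to state because it outsources the exact-sequence work, while yours is more self-contained.
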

\noindent
Note that by Theorem \ref{thm-estimate}, the right hand side in \eqref{CLestimate} is obviously also a lower bound for the number of critical points of $f$ in $\Omega$. We will prove Theorem \ref{thm-estimate} in Section \ref{section-proof}.


\subsection{Unbounded isolating neighbourhoods}
Let us recall that we required an isolating neighbourhood in Definition \ref{isolnbhd} to be closed and bounded. For our proof of the Arnold conjecture, we need to broaden our point of view and now consider also unbounded isolating neighbourhoods. As before, we assume that $\eta$ is an $\ls$-flow on the open subset $U$ of the separable real Hilbert space $E$.

\begin{defi}\label{isolnbhdunb}
A closed and unbounded set $\Omega\subset U$ is called an \textit{unbounded isolating neighbourhood} of $\eta$ if $\inv(\Omega,\eta)\subset\interior(\Omega)$. 
\end{defi}
\noindent
Note that, in general, the Conley index is not well defined for an unbounded isolating neighbourhood $\Omega$. However, if the invariant set $S = \inv(\Omega,\eta)$ is bounded, then it is contained in a ball of radius $R'$ and $\Omega^R:=\Omega \cap B(R)$ is an isolating neighbourhood of $\eta$ for every $R \geq R'$. It follows from Proposition \ref{moduleiso} that $\ch_E(\Omega^R)$ is independent of $R\geq R'$, which we define as the \textit{$E$-cohomological Conley index $\ch_E(\Omega)$ of $\Omega$}. The aim of this section is to introduce a sufficient condition for an invariant set to be bounded.

\begin{defi}
Let $F:U\subset E\rightarrow E$ be an $\ls$-vector field. We say that $F$ satisfies the weak boundedness condition $\wbds$ on $\Omega \subset U$ if there exists $\epsilon > 0$ such that

\[F^{-1}(B(\epsilon)) \cap \Omega\]
is bounded. Moreover, $F:I\times U\rightarrow E$ is a $\wbd$-homotopy on $\Omega$ if $$\bigcup\limits_{s\in[0,1]}F^{-1}_s(B(\epsilon)) \cap \Omega$$ is bounded, where $F_s:=F(s,\cdot):U\subset E\rightarrow E$.\\
If, in addition, $F=\nabla f$ for a $C^1$-functional, or $F_\lambda=\nabla f_\lambda$ for a continuous map $f:I\times U\rightarrow\mathbb{R}$ of $C^1$ functionals, then we say that $F$ is a gradient $\wbd$- vector field, or a gradient $\wbd$-homotopy on $\Omega$, respectively.  
\end{defi}

\noindent Let us point out that we use the name \textit{weak boundedness condition} in order to distinguish it from the stronger and more commonly used boundedness condition \bd, which requires that the preimage of any bounded set is bounded. Actually, a map between finite dimensional spaces is \bds if and only if it is \textit{proper}, however, in an infinite dimensional setting \bds implies that the map is proper, but there are proper maps which are not \bd (see \cite[p.4-5]{Bauer}).

\begin{prop}\label{prop-bounded}
Let $\eta$ be an $\ls$-flow on $U$ that is generated by a gradient vector field which is $\wbd$ on a closed set $\Omega \subset U$. Then the invariant set $\inv(\Omega,\eta)$ is bounded. Moreover, if $F:I\times U\rightarrow E$ is a gradient $\wbd$-homotopy on $\Omega$ and $\eta_\lambda$, $\lambda\in I$, the corresponding $\ls$-flows, then the invariant sets $\inv(\Omega,\eta_\lambda)$ are uniformly bounded.
\end{prop}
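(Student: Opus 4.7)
The plan is to use the \wbd{} hypothesis to bound the arc length of any orbit in $\inv(\Omega,\eta)$, and thereby the orbit itself. Set $N := F^{-1}(B(\epsilon))\cap\Omega$, which is bounded by hypothesis; fix $R>0$ with $N\subset B(R)$ and let $M := \sup_N f - \inf_N f$, which is finite because $\nabla f$ is locally Lipschitz and hence $f$ is bounded on bounded sets. By construction $\|\nabla f\|\geq\epsilon$ on $\Omega\setminus N$.

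First I would establish a length estimate along the flow. For any orbit $\gamma(t)=\eta^t(x)$ in $\Omega$ and any interval $[a,b]$ on which $\gamma(s)\in\Omega\setminus N$, the Lyapunov identity $\int_a^b \|\nabla f(\gamma(s))\|^2\,ds = f(\gamma(a)) - f(\gamma(b))$ combined with $\|\nabla f\|\geq\epsilon$ on this interval yields
\[
\int_a^b \|\dot\gamma(s)\|\,ds
= \int_a^b \|\nabla f(\gamma(s))\|\,ds
\leq \frac{1}{\epsilon}\bigl(f(\gamma(a)) - f(\gamma(b))\bigr).
\]
If both endpoints $\gamma(a),\gamma(b)$ lie in $N$, the $f$-difference is at most $M$, so the arc length of the excursion is at most $M/\epsilon$ and $\gamma$ stays inside $B(R+M/\epsilon)$ throughout.

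The heart of the argument is then to show that for every $x\in\inv(\Omega,\eta)$ the orbit $\gamma$ must enter $N$ arbitrarily far forward and backward in time. The natural attempt is to combine the monotonicity of $f\circ\gamma$ with the Lyapunov identity: if $f\circ\gamma$ stays bounded below on $[0,\infty)$, then $\int_0^\infty\|\nabla f(\gamma)\|^2\,dt<\infty$, so $\liminf_{t\to\infty}\|\nabla f(\gamma(t))\|=0$ and $\gamma(t_n)\in N$ along a sequence $t_n\to\infty$; the $t\to-\infty$ case is symmetric. I expect the main obstacle to be ruling out the scenario where $f\circ\gamma\to-\infty$, and I would handle it using the $\ls$-structure $F = L + K$: invertibility of $L$, the compactness of the perturbation, and the \wbd{} condition together constrain, through the Duhamel representation $\gamma(t) = e^{-tL}\gamma(0) - \int_0^t e^{-(t-s)L}K(\gamma(s))\,ds$ applied to the stable and unstable spectral components of $\gamma$, the orbit to a neighbourhood of $N$ inside $\Omega$, preventing a permanent escape.

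Granting this, every orbit $\gamma$ in $\inv(\Omega,\eta)$ decomposes into pieces inside $N\subset B(R)$ glued to excursions outside $N$ of arc length $\leq M/\epsilon$ whose endpoints lie in $N$. Hence $\gamma(t)\in B(R+M/\epsilon)$ for all $t\in\mathbb{R}$, and in particular $\|x\|\leq R+M/\epsilon$ uniformly in $x$, proving that $\inv(\Omega,\eta)$ is bounded. For the homotopy statement the same argument applies with $N$ replaced by the uniformly bounded set $\bigcup_{s\in[0,1]} F_s^{-1}(B(\epsilon))\cap\Omega$ supplied by the \wbd-homotopy assumption, and with $M$ a uniform bound on the oscillation of $f_s$ on that set, provided by joint continuity of $(s,u)\mapsto f_s(u)$; this yields a uniform bound on $\inv(\Omega,\eta_\lambda)$ for $\lambda\in[0,1]$.
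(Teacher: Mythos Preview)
Your arc-length estimate is exactly the paper's core computation: bound the length of an excursion outside $N$ by $\frac{1}{\epsilon}$ times the $f$-drop across it. Your choice $M=\sup_N f-\inf_N f$ is even a bit cleaner than the paper's, which instead bounds the $f$-oscillation over the \emph{critical set} and therefore needs an extra lemma (compactness of $F^{-1}(0)\cap\Omega$ for $\ls$-fields). One caveat: ``$\nabla f$ locally Lipschitz $\Rightarrow$ $f$ bounded on bounded sets'' is not a valid implication in infinite dimensions; the correct reason is that $F=L+K$ with $L$ bounded linear and $K$ compact maps bounded sets to bounded sets, so $\nabla f$ is bounded on $B(R)\supset N$ and hence $f$ is Lipschitz there.

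The genuine gap is the step you flag yourself: ruling out $f\circ\gamma\to-\infty$ (and $+\infty$ backward). Your Duhamel sketch is not a proof. Without an a~priori bound on $\gamma$, the integrand $e^{-(t-s)L}K(\gamma(s))$ is uncontrolled---compactness of $K$ gives nothing on an unbounded orbit, and the expanding component on $E^-$ does not obviously stay tame just because $\gamma\subset\Omega$. The paper sidesteps this entirely: at the start of Section~4 it records (from the reference on the $E$-Conley index) that for gradient $\ls$-flows the $\alpha$- and $\omega$-limit sets of an orbit in $\inv(\Omega,\eta)$ consist of critical points (this is the ``property (C)'' of such flows). Since critical points lie in $N$, the orbit automatically visits $N$ in both time directions, and the arc-length bound applies from the last exit from $B(r_0)$ to the limit. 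So the missing ingredient in your proof is precisely this convergence-to-critical-points property of gradient $\ls$-flows; once you invoke it, your decomposition into excursions works and the Duhamel detour is unnecessary.
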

\noindent
We have already used in Theorem \ref{continuation} that $\ch_E(\Omega,\eta_\lambda)$ is independent of $\lambda$ if $\Omega$ is an isolating neighbourhood of $\eta_\lambda$ for all $\lambda\in I$, which was shown in \cite[Thm, 2.12]{Invariance}. If we now combine this fact with the previous Proposition, we obtain the following important corollary. 

\begin{cor}\label{cor-bounded}
Let $F:I\times U\rightarrow E$ be a gradient $\wbd$-homotopy on $\Omega$ and $\eta_\lambda$, $\lambda\in I$, the corresponding $\ls$-flows. Suppose that $\Omega$ is an unbounded isolating neighbourhood for every $\eta_\lambda$. Then the $E$-cohomological Conley index $\ch_E(\Omega,\eta_\lambda)$ is well defined and independent of $\lambda$. 
\end{cor}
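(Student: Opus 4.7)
The plan is to reduce the statement to the bounded case by intersecting $\Omega$ with a sufficiently large ball. The crucial input is Proposition \ref{prop-bounded}, which applied to the gradient \wbd-homotopy $F$ produces a single radius $R'>0$ with
\[
\inv(\Omega,\eta_\lambda)\subset B(R')\quad\text{for every }\lambda\in I.
\]
I would then fix some $R>R'$ and set $\Omega^R:=\Omega\cap B(R)$. The first substantive step is to verify that $\Omega^R$ is a (bounded) isolating neighbourhood of $\eta_\lambda$ for every $\lambda\in I$. An orbit confined to $\Omega^R$ is confined to $\Omega$, hence sits in $\inv(\Omega,\eta_\lambda)\subset B(R')\subsetneq B(R)$, while conversely any orbit in $\inv(\Omega,\eta_\lambda)$ already lies in $\Omega^R$; thus $\inv(\Omega^R,\eta_\lambda)=\inv(\Omega,\eta_\lambda)$. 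Combining the hypothesis $\inv(\Omega,\eta_\lambda)\subset\interior(\Omega)$ with the uniform containment $\inv(\Omega,\eta_\lambda)\subset B(R')\subset\interior(B(R))$ then yields
\[
\inv(\Omega^R,\eta_\lambda)\subset\interior(\Omega)\cap\interior(B(R))\subset\interior(\Omega^R),
\]
as required by Definition \ref{isolnbhd}. By the construction recalled just before the corollary, this simultaneously shows that $\ch_E(\Omega,\eta_\lambda)$ is well defined and that it coincides with $\ch_E(\Omega^R,\eta_\lambda)$ for every admissible $R$.

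To deduce independence of $\lambda$, I would invoke the homotopy invariance of the $E$-cohomological Conley index from \cite[Thm.~2.12]{Invariance}, the same tool that underlies Theorem \ref{continuation}. After the above reduction, $\{\eta_\lambda\}_{\lambda\in I}$ is a continuous family of $\ls$-flows for which the single bounded closed set $\Omega^R$ is an isolating neighbourhood for \emph{every} parameter value, so that theorem directly produces the required isomorphism $\ch_E(\Omega^R,\eta_0)\cong\ch_E(\Omega^R,\eta_1)$. Composing with the identification $\ch_E(\Omega,\eta_\lambda)=\ch_E(\Omega^R,\eta_\lambda)$ closes the argument.

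I expect the only genuinely nontrivial point to be the verification that $\Omega^R$ inherits the isolating neighbourhood property uniformly in $\lambda$, and this is precisely where the two hypotheses conspire: the unbounded isolating hypothesis on $\Omega$ supplies the interior containment inside $\Omega$, while the weak boundedness condition, filtered through Proposition \ref{prop-bounded}, supplies the uniform separation from the boundary sphere $\partial B(R)$. Once that is in place, both conclusions of the corollary follow formally from results already assembled in the paper.
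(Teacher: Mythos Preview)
Your proposal is correct and follows exactly the approach the paper has in mind: the paper presents the corollary as the immediate combination of Proposition~\ref{prop-bounded} (uniform boundedness of $\inv(\Omega,\eta_\lambda)$) with the homotopy invariance result \cite[Thm.~2.12]{Invariance}, after passing to the bounded isolating neighbourhood $\Omega^R=\Omega\cap B(R)$ as set up in the paragraph preceding the corollary. You have simply spelled out in detail the verification that $\Omega^R$ is an isolating neighbourhood uniformly in $\lambda$, which the paper leaves implicit.
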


\noindent
We shall prove Proposition \ref{prop-bounded} below in Section \ref{section-proof}. Although the proof relies on the assumption that the flow is of gradient type, we believe that this assumptions can be weakened to pseudo-gradient flows as in \cite[Appendix A]{Rabinowitz}. However, this is not needed in the present paper.


\section{The Arnold Conjecture on the Torus $T^{2n}$}

Let $T^{2n}$ denote the standard Torus of dimension $2n$ and $\omega_0$ its standard symplectic structure. Let $H\in C^2(S^1 \times T^{2n},\mathbb{R})$ be a $1$-periodic Hamiltonian and $X_H$ the induced vecor field on $T^{2n}$ given by

\[dH(\cdot)=\omega(X_H,\cdot).\]
We consider the Hamiltonian equation

\begin{align}\label{Hamiltonian}
\dot{x}(t)=X_H(x(t)),
\end{align}
and the aim of this section is to prove the following deep theorem. 

\begin{theorem}[Strong Arnold conjecture on $T^{2n}$]\label{theorem-Arnold}
For every $C^2$-Hamiltonian on $T^{2n}$ there exist at least $2n+1$ contractible solutions of \eqref{Hamiltonian}.
\end{theorem}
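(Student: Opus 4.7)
\noindent The strategy is to apply Theorem \ref{thm-estimate} to the classical symplectic action functional and to produce an unbounded isolating neighbourhood whose relative cup-length is exactly $2n+1$. So Theorem \ref{theorem-Arnold} reduces to two tasks: identifying an appropriate isolating neighbourhood $\Omega$, and computing $\CL(\Omega,S)$.

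First I would reformulate the fixed-point problem variationally. Lift $H$ to a $\mathbb{Z}^{2n}$-periodic function on $\mathbb{R}^{2n}$ and consider the symplectic action functional
\[f_H(x)=\tfrac{1}{2}\int_0^1\langle -J\dot x,x\rangle\,dt-\int_0^1 H(t,x(t))\,dt\]
on the Hilbert space $E=H^{1/2}(S^1,\mathbb{R}^{2n})$, equipped with the Fourier splitting $E=E^+\oplus E^-$ in which the $2n$-dimensional subspace $E^0$ of constant loops is absorbed into $E^-$. After a harmless finite-rank modification of the quadratic part on $E^0$ the operator $L$ becomes invertible and selfadjoint, so $\nabla f_H$ is an $\ls$-vector field by compactness of the Sobolev embedding, and its critical points, modulo the $\mathbb{Z}^{2n}$-translation action on $E^0$, correspond exactly to the contractible $1$-periodic solutions of \eqref{Hamiltonian}.

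Second I would construct a $\mathbb{Z}^{2n}$-equivariant unbounded isolating neighbourhood $\Omega\subset E$, bounded in the $E^+$ and non-constant $E^-$ directions but unbounded along $E^0$, engineered so that its finite-dimensional slices $\Omega_V$ and the inverse limit defining $H^\ast_0(\Omega)$ together recover the exterior algebra $H^\ast(T^{2n})$, in particular $2n$ generators in degree one with non-vanishing top product. I would then verify the weak boundedness condition $\wbd$ for $\nabla f_H$ on $\Omega$, using the $C^2$-bounds on $H$ coming from the compactness of $T^{2n}$ to control the compact perturbation $\nabla f_H-Lx$.

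Third I would exploit the homotopy-invariance machinery. The linear homotopy $H_\lambda=(1-\lambda)H+\lambda c$ from $H$ to a constant $c$ should be a gradient $\wbd$-homotopy on $\Omega$, so Proposition \ref{prop-bounded} gives uniform boundedness of the invariant sets and Corollaries \ref{cor-bounded} and \ref{cor-moduleiso} together show that both $\ch_E(\Omega,\eta_\lambda)$ and $\CL(\Omega,\eta_\lambda)$ are independent of $\lambda\in[0,1]$. At the simplified end $H\equiv c$ the functional collapses to the pure quadratic form $\tfrac12\langle Lx,x\rangle$, whose Conley index and module pairing against the generators of $H^\ast_0(\Omega)$ can be read off directly; one extracts $\beta_0\in\ch_E(S)$ and $\alpha_1,\ldots,\alpha_{2n}\in H^1_0(\Omega)$ with
\[\beta_0\cup\alpha_1\cup\cdots\cup\alpha_{2n}\neq 0,\]
establishing $\CL(\Omega,S)=2n+1$. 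Theorem \ref{thm-estimate} then produces at least $2n+1$ critical points of $f_H$ in $\Omega$, which descend to the required $2n+1$ contractible $1$-periodic solutions on $T^{2n}$.

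I expect the main obstacle to be the second step: designing $\Omega$ so that it is simultaneously an unbounded isolating neighbourhood with bounded invariant set, supports the $\wbd$ condition along the homotopy to a constant Hamiltonian, and carries an $H^\ast_0$-ring of cup-length $2n$. Reconciling the Hilbert-space framework of the $E$-cohomological Conley index with the $\mathbb{Z}^{2n}$-symmetry of the torus---so that the inverse limit in the definition of $H^\ast_0$ extracts torus cohomology from finite-dimensional slices that individually look contractible---is where the real technical work lies.
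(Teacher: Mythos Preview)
Your overall strategy matches the paper's: set up the action functional on $H^{1/2}$, verify the $\wbd$ condition along the homotopy $H_\lambda=(1-\lambda)H$ to the zero Hamiltonian, invoke Corollary~\ref{cor-bounded} and Theorem~\ref{continuation}, compute the cup-length at the trivial end to be $2n+1$, and conclude by Theorem~\ref{thm-estimate}.

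The gap is exactly where you locate it, and your proposed resolution cannot work within this framework. If you stay in $E=H^{1/2}(S^1,\mathbb{R}^{2n})$ and take $\Omega$ unbounded along the constant-loop subspace $E^0\cong\mathbb{R}^{2n}$, then the very definition of $\ch_E$ for an unbounded neighbourhood forces you back to the bounded truncations $\Omega^R=\Omega\cap B(R)$. Each slice $\Omega^R_V$ is then a bounded, essentially convex region in a finite-dimensional space; it is contractible, so $H^{>0}(\Omega^R_V)=0$ for every $V$, whence $H^{>0}_0(\Omega^R)=0$ and $\CL(\Omega^R,S)\leq 1$. No amount of $\mathbb{Z}^{2n}$-equivariance repairs this: the inverse limit of trivial groups is trivial, and the $E$-cohomological index as defined here has no equivariant or quotient mechanism that could see the translation symmetry. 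Your hope that ``the inverse limit \ldots extracts torus cohomology from finite-dimensional slices that individually look contractible'' is therefore the point where the argument actually breaks.

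The paper's device is to abandon the covering space in the $E^0$-direction. One first passes to the honest quotient $\mathcal{M}=T^{2n}\times Z^+\times Z^-$ and then \emph{embeds} $T^{2n}\hookrightarrow\mathbb{R}^{4n}$ by sending each circle factor to the unit circle in $\mathbb{R}^2$, working thereafter in the Hilbert space $\hat{E}=\mathbb{R}^{4n}\times Z^+\times Z^-$ with $E^+=\mathbb{R}^{4n}\oplus Z^+$. The functional is extended to a tubular neighbourhood $U=D_0^{2n}\times Z^+\times Z^-$ by adding a radial penalty $\sum_i(1-r_i)^2$, which creates no new critical points. The unbounded isolating neighbourhood is then $\Omega=A^{2n}\times Z^+\times Z^-$ with $A\subset\mathbb{R}^2$ an annulus; its bounded truncations have slices $\Omega^R_V\simeq A^{2n}\simeq T^{2n}$, so $H^\ast_0(\Omega^R)\cong H^\ast(T^{2n})$ directly, and for the trivial Hamiltonian an explicit suspension computation gives $H^\ast_E(X,A)\cong H^\ast(T^{2n})$ as an $H^\ast(T^{2n})$-module. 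The torus topology is thus inserted into the ambient Hilbert space by an embedding, not recovered from an equivariant limit.
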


\noindent The above theorem was first proved by Conley and Zehnder in \cite{ConleyZehnder} (cf. also \cite{Hofer}). Let us point out that our proof is considerably shorter. The main point of our argument is to use Theorem \ref{thm-estimate} as well as the continuation principle from Theorem \ref{continuation} to reduce the problem to the case of a trivial Hamiltonian.\\
Let us further emphasize that there is a serious reason to look for new proofs of Theorem \ref{theorem-Arnold}. For example, the strong Arnold conjecture is still open on $T^{2n} \times \mathbb{C}P^m$ where a similar analytical setting might be introduced. To the best of our knowledge, the previous methods only work to some extent in this case (see, however, \cite{Oh} for partial results), and therefore it is worthwhile to develop new approaches.

\subsection{The Analytical Setting}
Before proving Theorem \ref{theorem-Arnold}, let us first recall the analytical setting from \cite{Hofer}. We start with the case of $\mathbb{R}^{2n}$ and consider the space of smooth loops $C^\infty(S^1,\mathbb{R}^{2n})$ in $\mathbb{R}^{2n}$. If we set $e_k(t):= e^{tk2\pi J}$, $k\in\mathbb{Z}$, where $J$ is the standard symplectic matrix, then any $x \in C^\infty(S^1,\mathbb{R}^{2n})$ is represented by its Fourier-series
\begin{align}\label{Fourier}
x(t) = \sum_{k \in \mathbb{Z}} x_ke_k(t).
\end{align}
The Sobolev space $H^{\frac12}(S^1,\mathbb{R}^{2n})$ is the Hilbert space which is obtained as the completion of $C^\infty(S^1,\mathbb{R}^{2n})$ with respect to the scalar product
$$\ip{x}{y}_s = \ip{x_0}{y_0} +  2\pi\sum_{k \in \mathbb{Z}}\abs{k}\ip{x_k}{y_k}.$$
There is an orthogonal decomposition

\[H^{\frac12}(S^1,\mathbb{R}^{2n})=Z_0\oplus Z^-\oplus Z^+\]
into a $2n$-dimensional subspace $Z_0$ and closed infinite-dimensional subspaces $Z^+$ and $Z^-$ which correspond to $k = 0$, $k > 0$ and $k < 0$ in the Fourier-series expansion \eqref{Fourier}, respectively. In what follows, we denote by $P_0$, $P^+$ and $P^-$ the corresponding orthogonal projections.\\
Now let $H\in C^2(S^1 \times \mathbb{R}^{2n},\mathbb{R})$ be a Hamiltonian such that $|H(x)| \leq C \cdot |x|^2$ at infinity and such that the second spatial derivative $H''$ is globally bounded.  We define a functional $\Phi_H: C^\infty(S^1,\mathbb{R}^{2n}) \to \mathbb{R}$ by the formula

\begin{align}\label{phiH}
\Phi_H(x) = a(x) - b(x) := \frac12 \int_0^1 \ip{-J\dot{x}(t)}{x(t)} \,dt - \int_0^1 H(t,x(t))\, dt.
\end{align}
The importance of $\Phi_H$ comes from the fact that the critical points of $\Phi_H$ are periodic solutions of the Hamilton equation \eqref{Hamiltonian}.
It is easy to see that $\Phi_H$ extends to $H^{\frac12}(S^1,\mathbb{R}^{2n})$, and 

\begin{align}\label{L+K}
\nabla \Phi_H=L+K
\end{align}
where $L = \nabla a = P^+ - P^-$ is a selfadjoint Fredholm operator and 
$K = -\nabla b = -j^* \nabla H$ is a compact map because of the compactness of the adjoint $j^\ast:L^2\rightarrow H^{\frac12}$ of the inclusion.\\
On a general manifold, it is a delicate problem to define spaces $H^{\frac12}(S^1,M)$ as $H^{\frac12}(S^1,\mathbb{R}^{2n})$ contains non-continuous functions which consequently have no local meaning. However, for a torus one can overcome this problem by using the universal covering $\mathbb{R}^{2n} \to T^{2n} = \mathbb{R}^{2n}/\mathbb{Z}^{2n}$. Then smooth Hamiltonians on $T^{2n}$
are in one-to-one correspondence with $\mathbb{Z}^{2n}$-invariant smooth Hamiltonians on $\mathbb{R}^{2n}$, where $\mathbb{Z}^{2n}$ acts on $\mathbb{R}^{2n}$ by translations. By a slight abuse of notation, we will denote by $H$ both the Hamiltonian on the torus and the Hamiltonian lifted to $\mathbb{R}^{2n}$. Note that the lifted Hamiltonian on $\mathbb{R}^{2n}$ is $\mathbb{Z}^{2n}$-invariant and therefore its second spatial derivative is bounded and it obviously satisfies the growth condition mentioned above. 
Now the corresponding functional $\Phi_H$ in \eqref{phiH} is $\mathbb{Z}^{2n}$-invariant as well, and therefore it descends to a functional on the quotient space
$$ \mathcal{M} := Z_0/\mathbb{Z}^{2n} \times Z^+ \times Z^- = T^{2n} \times Z^+ \times Z^-.$$


\subsection{Proof of Theorem \ref{theorem-Arnold}}
We suppose as in the previous section that $H\in C^2(S^1\times T^{2n},\mathbb{R})$ is a given Hamiltonian. Let us note at first that $F=\nabla \Phi_H$ in \eqref{L+K} is an $\ls$-vector field, even though the operator $L$ is not invertible. Indeed, if we write $F=\hat{L}+\hat{K}:=(L+P_0)+(K-P_0)$, where $P_0$ is the orthogonal projection onto the finite dimensional kernel of $L$ as introduced above, then $F$ is the sum of an invertible selfadjoint operator and a compact map.\\
Since we want to apply the $E$-cohomological Conley index, we need to work with an open subset of a Hilbert space rather than a Hilbert manifold. For that reason, let us embed $\mathcal{M}$ into $\hat{E} = \mathbb{R}^{4n} \times Z^+ \times Z^-$ in such a way that every $S^1$ in $T^{2n} = S^1 \times \ldots \times S^1$ is mapped to the unit circle in $\mathbb{R}^{2}$. We consider the open set $$U := D^{2n}_0\times Z^+ \times Z^- \subset\hat{E}$$ of $\hat{E}$, where $D_0=\{(x,y)\in\mathbb{R}^2:\, 0<x^2+y^2<4\}$ is a punctured disc of radius $2$ in $\mathbb{R}^2$, and we let $\pi: \mathcal{N}\to\mathcal{M}$ be the standard projection to $T^{2n}$ on $D^{2n}_0$ and the identity on $Z^+$ and $Z^-$. The map $\Phi_H$ can be extended to $U$ by
$$\Psi_H(x) =\Phi_H(\pi(x)) + \sum_{i=1}^{2n} (1-r_i(x))^2,$$
where $r_i(x)$ denotes the polar coordinate in $\mathbb{R}^2$ of the projection of $x\in U$ to the $i$-th component of $(\mathbb{R}^2)^{2n}$. Note that the extension is done in such a way that $\Psi_H$ and $\Phi_H$ have the same critical points. We denote by $\tilde{K}$ the compact operator which is the sum of $\hat{K}$ and $\nabla(\sum_{i=1}^{2n}(1-r_i(x))^2)$.\\
Now Theorem \ref{theorem-Arnold} can be obtained as follows. We note first that $\nabla\Psi_H=\hat{L}+\tilde{K}$ is an $\ls$-vector field, and the negative and positive spectral subspaces of the selfadjoint isomorphism $\hat{L}$ are given by 

\[E^+= \mathbb{R}^4\oplus Z^+,\quad E^-=Z^-.\]
We consider the family $\{H_\lambda = (1-\lambda)H: \lambda \in [0,1]\}$ of Hamiltonians, and obtain a corresponding family of functionals $\Psi_\lambda:U\rightarrow\mathbb{R}$. Let us denote by $\hat{F}_\lambda:=\nabla\Psi_\lambda:U\subset\hat{E}\rightarrow\hat{E}$ the corresponding family of $\ls$-vector fields defined on the unbounded open subset $U$ of $\hat{E}$.

\begin{lemma}\label{lemma:arnold_bd} 
For every bounded set $B\subset\hat{E}$ the set $\bigcup_{\lambda \in [0,1]} \hat{F}^{-1}_{\lambda}(B)\subset U$ is bounded. In particular, $\hat{F}$ is a $\wbd$-homotopy on every $\Omega \subset U$.
\end{lemma}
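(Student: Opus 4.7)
The plan is to exploit the splitting $\hat{F}_\lambda = \hat{L} + \tilde{K}_\lambda$, where $\hat{L}\colon \hat{E}\to\hat{E}$ is an invertible selfadjoint operator and $\tilde{K}_\lambda$ is a compact nonlinearity depending continuously on $\lambda$. If I can show that $\sup_{\lambda\in[0,1],\,x\in U}\|\tilde{K}_\lambda(x)\|<\infty$, then the identity $\hat{L}x = \hat{F}_\lambda(x) - \tilde{K}_\lambda(x)$ with $\hat{F}_\lambda(x)\in B$ bounded yields $\|x\|\le\|\hat{L}^{-1}\|\bigl(\sup_{v\in B}\|v\| + \sup_{\lambda,x}\|\tilde{K}_\lambda(x)\|\bigr)$, proving the lemma. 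The ``in particular'' clause is then immediate by taking $B=B(\epsilon)$ for any fixed $\epsilon>0$: the union $\bigcup_\lambda \hat{F}_\lambda^{-1}(B(\epsilon))$ is bounded in $\hat{E}$, so its intersection with any closed $\Omega\subset U$ is automatically bounded, which is exactly the $\wbd$-homotopy condition.

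To establish the uniform bound on $\tilde{K}_\lambda$, I would first observe that the $\mathbb{R}^{4n}$-component of every $x\in U$ already lies in the bounded set $D_0^{2n}\subset B(0,2\sqrt{2n})$. Consequently any contribution to $\tilde{K}_\lambda(x)$ that depends only on this component is automatically bounded. This handles the radial penalty term $\nabla\!\sum_{i=1}^{2n}(1-r_i(x))^2$, where a direct calculation $\nabla(1-r_i)^2=-2(1-r_i)(x_i,y_i)/r_i$ gives the pointwise bound $|\nabla(1-r_i)^2|=2|1-r_i|\le 2$ on $D_0=\{0<r<2\}$, uniformly in $x$.

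The essential piece is the Hamiltonian contribution, which after composing with the radial projection $\pi$ takes the form $-j^{\ast}\nabla H_\lambda(\cdot,\pi(x)(\cdot))$ up to finite-rank corrections supported in $\mathbb{R}^{4n}$, where $j^{\ast}\colon L^2(S^1,\mathbb{R}^{2n})\to H^{1/2}(S^1,\mathbb{R}^{2n})$ is the compact dual embedding. The key observation is that $\mathbb{Z}^{2n}$-invariance of the lifted Hamiltonian forces $\nabla H$ to be uniformly bounded on $S^1\times\mathbb{R}^{2n}$ by some $C>0$, and hence $|\nabla H_\lambda(t,y)|\le(1-\lambda)C\le C$ for all $t,y,\lambda$. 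Therefore $\|\nabla H_\lambda(\cdot,\pi(x)(\cdot))\|_{L^2}\le C$ independently of $x\in U$ and $\lambda\in[0,1]$, and applying the bounded operator $j^{\ast}$ yields the desired uniform $H^{1/2}$-bound. Projecting the equation $\hat{L}x=\hat{F}_\lambda(x)-\tilde{K}_\lambda(x)$ onto $Z^+\oplus Z^-$, where $\hat{L}$ acts as the linear isometry $P^+-P^-$, then bounds the infinite-dimensional part of $x$. The main subtlety I foresee is verifying that the composition with $\pi$ genuinely reduces the Hamiltonian term to $j^{\ast}\nabla H_\lambda$ up to a manageable, uniformly bounded correction, and keeping track of the various projections between $H^{1/2}$ and $\hat{E}$; once this bookkeeping is carried out, the estimates reduce to the pointwise $C^0$-bound on $\nabla H$ afforded by periodicity.
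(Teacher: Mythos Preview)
Your proposal is correct and follows essentially the same line as the paper: both arguments rest on the single observation that $\tilde{K}_\lambda$ is uniformly bounded on $U$ (from the $\mathbb{Z}^{2n}$-periodicity of $H$, hence boundedness of $\nabla H$, together with $0<r_i<2$), after which the invertibility of $\hat{L}$ immediately controls $\|x\|$. The paper phrases this as a short contradiction argument using the lower bound $\|\hat{F}_\lambda(x)\|\ge \tfrac12\|P^+x\|+\tfrac12\|P^-x\|-\|\tilde{K}_\lambda(x)\|$, whereas you argue directly via $x=\hat{L}^{-1}(\hat{F}_\lambda(x)-\tilde{K}_\lambda(x))$; the content is the same, and your version is in fact more explicit about why $\tilde{K}_\lambda$ is bounded.
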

\begin{proof}
Suppose on the contrary that there exists a sequence $\{(\lambda_n,x_n)\}\subset I\times U$ and a constant $c >0$ such that $\norm{x_n} \to \infty$ and  $\norm{\hat{F}_{\lambda_n}(x_n)} < c$. As $P^+x_n$ and $P^-x_n$ are orthogonal for all $n$, we have 
$$c > \norm{\hat{F}_{\lambda_n}(x_n)} \geq \frac{1}{2}\norm{P^+x_n} + \frac{1}{2}\norm{P^-x_n} - \norm{\tilde{K}_{\lambda_n}(x_n)}.$$
 Since the family $\{H_{\lambda_n}\}$ is uniformly bounded and $0 < r_i < 2$ we see that the norm $ \norm{\tilde{K}_{\lambda_n}(x_n)}$ is bounded. On the other hand, if $\norm{x_n} \to \infty$, then $\norm{P^+x_n} \to \infty$ or $\norm{P^-x_n} \to \infty$, which is a contradiction.
\end{proof}
\noindent
Let now $\eta_\lambda$ be the flow on $U$ generated by $\hat{F}_\lambda$. Clearly, 
\[\Omega=A^{2n}\times Z^+ \times Z^-,\]
where 
\[A:=\left\{(x,y)\in\mathbb{R}^2:\, \frac{1}{2}\leq\sqrt{x^2+y^2}\leq\frac{3}{2}\right\},\]
is an unbounded isolating neighbourhood for $\eta_\lambda$ for every $\lambda$ in the sense of Definition \ref{isolnbhdunb}. By Corollary \ref{cor-bounded} and Lemma \ref{lemma:arnold_bd}, the $E$-cohomological Conley index $\ch(\Omega,\eta_\lambda)$ is well defined and independent of $\lambda$. However, for $\lambda = 1$ the flow corresponds to the trivial Hamiltonian $H\equiv 0$, and consequently we now want to compute the cup-length for $\eta_1$. We note at first that the pair

\[(X,A):=(A^{2n}\times B(Z^+)\times B(Z^-), A^{2n}\times B(Z^+)\times\partial B(Z^-))\]
is an index pair, and if $V\subset Z^-$ is of finite dimension, then

\[(X_V,A_V)=(A^{2n}\times B(Z^+)\times B(V), A^{2n}\times B(Z^+)\times\partial B(V)),\]
where $B(V)$ denotes the unit ball in $V$. Hence we get for $k\in\mathbb{Z}$

\[H^k(X_V,A_V)=H^k(X_V/A_V)=H^k(S(V)\wedge T^{2n})=H^{k-\dim(V)}(T^{2n}),\]
where $S(V)$ denotes the unit sphere in $V$. Moreover, if $W\supset V$ is another finite dimensional subspace, then the Mayer-Vietoris homomorphism $\Delta^k_{V,W}$ mapping

\[H^{k+\dim(V)}(X_V,A_V)=H^{k+\dim(V)}(S(V)\wedge T^{2n})\]
to 
\[H^{k+\dim(W)}(X_W,A_W)=H^{k+\dim(W)}(S(W)\wedge T^{2n})\]
is by definition just the suspension isomorphism. Hence we obtain

\[H^\ast_E(X,A)=H^\ast(T^{2n}).\]
Finally, to find the cup-length, we note at first that for our isolating neighbourhood $\Omega$, $R'\geq 2$, and any finite dimensional subspace $V\subset Z^-$

\[H^\ast(\Omega_V^{R'})=H^k(A^{2n}\times B(Z^+;R')\times B(V;R'))=H^\ast(T^{2n}),\]
where $B(Z^+;R')$ and $B(V;R')$ denote the balls of radius $R'$. Hence $\CL(\Omega^{R'},\eta_1)$ is just the ordinary cup-length of the torus $T^{2n}$, which is $2n+1$. Now let $R>0$ be so large that $\Omega^R$ is an isolating neighbourhood of $\eta_\lambda$ for all $\lambda\in I$. We obtain from Theorem \ref{continuation} that $\CL(\Omega^R,\eta_0)=2n+1$. By Theorem \ref{thm-estimate}, this is a lower bound for the number of critical points of $\Phi_H$ in $\Omega^R$, and so we have proved the Arnold conjecture on $T^{2n}$.



\section{Proof of Theorem \ref{thm-estimate} and Proposition \ref{prop-bounded}}\label{section-proof}
In this section we prove Theorem \ref{thm-estimate} and Proposition \ref{prop-bounded}. In what follows, we will use that if $F:U\subset E\rightarrow E$ is an $\ls$-vector field such that $F=\nabla f$ for a functional $f:U\rightarrow\mathbb{R}$, and $\{\eta^t(x):\, t\in\mathbb{R}\}$ is a trajectory such that $\eta^t(x)\in\Omega$ for all $t\in\mathbb{R}$ and some bounded set $\Omega\subset
U$, then the limits $\alpha(x)$ and $\omega(x)$ are contained in the set of critical points of $f$. This is, for example, a simple consequence of the property (C) of gradient $\ls$-vector fields that was discussed in \cite{Invariance}.

\subsection{Proof of Theorem \ref{thm-estimate}}
We will need the following two properties of the cup-length $\CL$ that we introduced in Definition \ref{cuplength}. As the proofs are purely algebraic, we leave it to the reader to check that they follow by obvious modifications from \cite[Lemma 2.2 \& 2.3]{Uss}.

\begin{lemma}\label{lem:cupLengthProperty}
If $B \subset A \subset X \subset Y$ are closed and bounded subsets of $E$, then
$$\CL(Y;X,B) \leq \CL(Y;X,A) + \CL(Y;A,B).$$
\end{lemma}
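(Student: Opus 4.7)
The plan is to run the standard Lusternik-Schnirelman subadditivity argument, adapted to the $H^\ast_0(Y)$-module structure on $E$-cohomology that was established in Proposition \ref{prop-module}. Set $m := \CL(Y;X,A)$ and $n := \CL(Y;A,B)$, and consider an arbitrary class $\beta \in H^\ast_E(X,B)$ together with $\gamma_1, \ldots, \gamma_{m+n} \in H^{>0}_0(Y)$. The goal is to show that $\beta \cup \gamma_1 \cup \cdots \cup \gamma_{m+n} = 0$ in $H^\ast_E(X,B)$.

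The main tool I would use is the long exact sequence of the triple $(X,A,B)$ in $E$-cohomology,
$$\cdots \to H^q_E(X,A) \xrightarrow{i^\ast} H^q_E(X,B) \xrightarrow{j^\ast} H^q_E(A,B) \xrightarrow{\delta} H^{q+1}_E(X,A) \to \cdots,$$
obtained by passing to the direct limit over $V \in \mathcal{V}$ of the ordinary triple sequences of $(X_V, A_V, B_V)$ in Alexander-Spanier cohomology with compact supports. Naturality of the cup product together with the usual derivation property of the coboundary makes each map in this sequence a morphism of $H^\ast_0(Y)$-modules (with $\delta$ of degree $+1$).

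The argument then runs as follows. Applying $j^\ast$ to $\beta \cup \gamma_1 \cup \cdots \cup \gamma_n$ produces $j^\ast(\beta) \cup \gamma_1 \cup \cdots \cup \gamma_n$, which vanishes in $H^\ast_E(A,B)$ by the definition of $\CL(Y;A,B) = n$. Exactness of the triple sequence then produces a lift $\alpha \in H^\ast_E(X,A)$ with $i^\ast(\alpha) = \beta \cup \gamma_1 \cup \cdots \cup \gamma_n$. By the definition of $\CL(Y;X,A) = m$, the product $\alpha \cup \gamma_{n+1} \cup \cdots \cup \gamma_{n+m}$ vanishes in $H^\ast_E(X,A)$, and applying the $H^\ast_0(Y)$-linear map $i^\ast$ yields
$$0 = i^\ast(\alpha \cup \gamma_{n+1} \cup \cdots \cup \gamma_{n+m}) = i^\ast(\alpha) \cup \gamma_{n+1} \cup \cdots \cup \gamma_{n+m} = \beta \cup \gamma_1 \cup \cdots \cup \gamma_{m+n},$$
as required. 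The edge cases $m = 0$ and $n = 0$ fall out of the same diagram: if $n = 0$ then $j^\ast(\beta) = 0$ automatically and $\beta$ lifts through $i^\ast$, so the bound follows from $\CL(Y;X,A) = m$ alone; symmetrically when $m = 0$ one uses that $j^\ast$ is injective in the relevant degrees.

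The main obstacle I anticipate is not the algebraic manipulation itself but the verification that the triple long exact sequence, and the $H^\ast_0(Y)$-linearity of each of its maps, really survive the passage to the direct limit defining $H^\ast_E$. Concretely, one has to check that the Alexander-Spanier coboundary commutes with the Mayer-Vietoris transfer maps $\Delta^q_{V,W}$, which is essentially the same compatibility already invoked in the proof of Proposition \ref{prop-module}, and that the resulting connecting map $\delta$ satisfies the derivation identity $\delta(\beta \cup \alpha) = \delta(\beta) \cup \alpha$ for $\alpha \in H^\ast_0(Y)$. Once these naturality statements are in place, the rest of the proof is the routine cup-length argument of \cite[Lem. 2.2]{Uss} carried through verbatim.
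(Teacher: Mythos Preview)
Your proposal is correct and is precisely the argument the paper has in mind: the paper does not give its own proof but refers the reader to \cite[Lemma 2.2 \& 2.3]{Uss}, and what you have written is exactly the ``obvious modification'' of that argument to the $H^\ast_0(Y)$-module structure on $H^\ast_E$. The technical checks you flag (existence of the long exact sequence of the triple in $E$-cohomology and $H^\ast_0(Y)$-linearity of its maps) are indeed the only nontrivial points, and they go through for the same reason as in Proposition \ref{prop-module}, namely the compatibility of the Mayer-Vietoris transfer maps with cup products and with ordinary coboundaries.
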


\begin{lemma}\label{lem:cupLengthProperty2}
If $A \subset X \subset Y_1 \subset Y_2$ are closed and bounded subsets of $E$, then
$$\CL(Y_2;X,A) \leq \CL(Y_1;X,A).$$
\end{lemma}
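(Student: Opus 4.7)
The plan is to exploit the naturality of the module structure. Recall that the $H^\ast_0(\Omega)$-module structure on $H^\ast_E(X,A)$ is obtained by composing the restriction ring homomorphism $j^\ast_{\Omega,X}:H^\ast_0(\Omega)\to H^\ast_0(X)$ with the intrinsic $H^\ast_0(X)$-module structure from Proposition \ref{prop-module}. The key observation is that when $X \subset Y_1 \subset Y_2$, these restriction maps factor as
\[
j^\ast_{Y_2,X}=j^\ast_{Y_1,X}\circ j^\ast_{Y_2,Y_1}\colon H^\ast_0(Y_2)\to H^\ast_0(Y_1)\to H^\ast_0(X),
\]
simply because the corresponding inclusions $X\hookrightarrow Y_1\hookrightarrow Y_2$ compose in the obvious way on each finite dimensional slice $X_V\hookrightarrow (Y_1)_V\hookrightarrow (Y_2)_V$. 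Consequently, for any $\beta\in H^\ast_E(X,A)$ and any $\alpha\in H^\ast_0(Y_2)$, the module product computed via $Y_2$ agrees with the module product computed via $Y_1$ applied to the restricted class:
\[
\beta\cup_{Y_2}\alpha \;=\; \beta\cup_{Y_1} j^\ast_{Y_2,Y_1}(\alpha).
\]

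Next, I would chase this identity through iterated products. Set $k=\CL(Y_2;X,A)$. If $k\leq 1$, the inequality $\CL(Y_2;X,A)\leq \CL(Y_1;X,A)$ is immediate from Definition \ref{cuplength} (either $H^\ast_E(X,A)=0$, or $H^\ast_E(X,A)\neq 0$ which forces $\CL(Y_1;X,A)\geq 1$). For $k\geq 2$, pick a witness $\beta_0\in H^\ast_E(X,A)$ together with $\alpha_1,\dots,\alpha_{k-1}\in H^{>0}_0(Y_2)$ such that
\[
\beta_0\cup_{Y_2}\alpha_1\cup_{Y_2}\cdots\cup_{Y_2}\alpha_{k-1}\neq 0.
\]
Applying the compatibility relation above $k-1$ times gives
\[
\beta_0\cup_{Y_1} j^\ast_{Y_2,Y_1}(\alpha_1)\cup_{Y_1}\cdots\cup_{Y_1} j^\ast_{Y_2,Y_1}(\alpha_{k-1})\neq 0.
\]
Since $j^\ast_{Y_2,Y_1}$ is a ring homomorphism preserving degree, each $j^\ast_{Y_2,Y_1}(\alpha_i)$ lies in $H^{>0}_0(Y_1)$. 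Hence this is a non-vanishing product of the required shape for $Y_1$, which forces $\CL(Y_1;X,A)\geq k$ by Definition \ref{cuplength}.

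There is essentially no obstacle here: the argument is purely formal and rests on the functoriality of Alexander--Spanier cohomology combined with the explicit construction of the module structure in the proof of Proposition \ref{prop-module}. The only small point worth checking is that the iterated identity $\beta\cup_{Y_2}\alpha=\beta\cup_{Y_1}j^\ast_{Y_2,Y_1}(\alpha)$ remains valid after multiple cups; this is immediate since each individual cup product depends on $\alpha$ only through its image in $H^\ast_0(X)$, and the composition of restrictions is the restriction of the composition.
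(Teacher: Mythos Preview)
Your proof is correct. The paper itself does not supply a proof of this lemma; it only remarks that both Lemmas \ref{lem:cupLengthProperty} and \ref{lem:cupLengthProperty2} follow by obvious modifications from \cite[Lemma 2.2 \& 2.3]{Uss}. Your argument---transporting a cup-length witness from $Y_2$ to $Y_1$ via the factorisation $j^\ast_{Y_2,X}=j^\ast_{Y_1,X}\circ j^\ast_{Y_2,Y_1}$ of the restriction homomorphisms---is precisely the natural adaptation the authors have in mind, and there is nothing further to add.
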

\noindent
Now let us consider an isolating neighbourhood $\Omega$ for the flow $\eta$ generated by the gradient of the function $f:U \rightarrow\mathbb{R}$ in Theorem \ref{thm-estimate}. As we suppose that there are only finitely many critical points of $f$ in $\Omega$, the set of critical values $\crit(f,\Omega)$ is finite as well, say, $c_1<\ldots<c_k$. Let $M_i\subset\Omega$ denote the set of stationary points with values $c_i$, and set for $1\leq i\leq j\leq k$

\[M_{ij}=\{x\in \Omega:\omega(x)\cup\alpha(x)\subset M_i\cup M_{i+1}\cup\ldots\cup M_j\},\]
where $\alpha(x)$ and $\omega(x)$ denote as above the $\alpha$ and $\omega$ limits of $x\in E$ under the flow $\eta$. Note that $M_{1k}$ consists of all the critical points of $f$ inside $\Omega$ and all the orbits connecting them. Consequently, $$M_{1k} = \inv(\Omega,\eta).$$ Now let $(X,A)$ be an index pair for $M_{1k}$.

\begin{lemma}[Morse filtration]
There exist sets 

\[X_0=A\subset X_1\subset\ldots\subset X_k=X\]
such that $(X_j,X_{i-1})$ is an index pair for $M_{ij}$.
\end{lemma}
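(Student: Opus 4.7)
The strategy is to exploit the Lyapunov property of $f$ with respect to the gradient flow $\eta$---namely that $f$ strictly decreases along non-constant trajectories that remain in $X$---to slice the given index pair $(X,A)$ along regular sublevel sets of $f$. Concretely, I would choose real numbers $a_0<c_1<a_1<c_2<\ldots<c_k<a_k$ with $a_0<\inf_X f$ and $a_k>\sup_X f$, and set
\[X_j:=A\cup\bigl(X\cap\{f\le a_j\}\bigr),\qquad j=0,1,\ldots,k.\]
Each $X_j$ is closed as a finite union of intersections of closed sets. Since $f$ is non-increasing along orbits remaining in $X$, the sublevel slice $X\cap\{f\le a_j\}$ is positively invariant in $X$; together with the positive invariance of $A$ inherited from $(X,A)$ this makes $X_j$ positively invariant in $X$. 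The endpoint identifications $X_0=A$ and $X_k=X$ and the nesting $X_{j-1}\subset X_j$ are immediate from the choice of the $a$'s.

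The main task is to verify the three index-pair axioms for $(X_j,X_{i-1})$ with respect to $M_{ij}$, for every $1\le i\le j\le k$. For positive invariance of $X_{i-1}$ in $X_j$, I would split into the cases $y\in A$ and $y\in X\cap\{f\le a_{i-1}\}$: the first is immediate from the positive invariance of $A$ in $X\supset X_j$, while the second uses monotonicity of $f$ along the flow to propagate the inequality $f\le a_{i-1}$ along the whole arc. For the exit axiom, I would observe that an orbit cannot leave $X_j$ across the level set $\{f=a_j\}$ because $f$ is non-increasing, hence it must leave the ambient $X$ itself, and the exit axiom for $(X,A)$ then produces a time at which the orbit meets $A\subset X_{i-1}$. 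For the middle axiom, a direct computation gives
\[X_j\setminus X_{i-1}=(X\setminus A)\cap\{a_{i-1}<f\le a_j\};\]
a bounded full orbit lying in its closure must have both $\alpha$- and $\omega$-limits inside the critical set at $f$-levels in $[a_{i-1},a_j]$, and the strict separations $a_{i-1}<c_i$ and $c_j<a_j$ force these to lie inside $M_i\cup\ldots\cup M_j$, so the invariant subset is exactly $M_{ij}$. The interior condition $M_{ij}\subset\interior(\overline{X_j\setminus X_{i-1}})$ then follows from $M_{ij}\subset\inv(\Omega,\eta)\subset\interior(\overline{X\setminus A})$ combined with the same strict inequalities.

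The main obstacle I anticipate is the middle axiom, specifically ruling out that passing to the closure $\overline{X_j\setminus X_{i-1}}$ introduces spurious full orbits on the boundary slice $\{f=a_{i-1}\}$ or on $\overline{A}\cap(X\setminus A)$. Handling this cleanly relies on the property (C) of gradient $\ls$-vector fields recalled at the beginning of this section, which forces any bounded full orbit to accumulate only on the critical set; the strict inequalities $a_{i-1}<c_i$ and $c_j<a_j$ then prevent any such limit from sneaking outside the intended band of critical levels. Should this level-set argument prove too delicate in the infinite-dimensional $\ls$-setting, a safe fallback is to replace $X\cap\{f\le a_j\}$ by its forward $\eta$-invariant hull inside $X$, a standard device that thickens the sublevel set just enough to make its boundary transversal to the flow and renders the middle axiom automatic.
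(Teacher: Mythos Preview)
Your proposal is correct and follows essentially the same approach as the paper: the paper also chooses regular values $b_i\in(c_i,c_{i+1})$ and defines the filtration by sublevel sets, namely $X_i:=X\cap f^{-1}(-\infty,b_i]$, leaving the verification of the index-pair axioms to the reader. Your version is in fact slightly more careful---by writing $X_j=A\cup(X\cap\{f\le a_j\})$ you ensure the inclusion $X_0=A\subset X_1$ without any tacit assumption on the $f$-values along $A$, and you spell out the three axioms in detail; the obstacle you flag on the level slice $\{f=a_{i-1}\}$ is harmless, since a full orbit there would have to be stationary at the regular value $a_{i-1}$.
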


\begin{proof}
We let $b_i \in (c_i,c_{i+1})$, $i=1,\ldots,k-1$ be regular values of $f$, set $b_k=\infty$, and define $X_0=A$ as well as $X_i:= X \cap f^{-1}(-\infty,b_i]$, $i=1,\ldots,k$. Then it is readily seen that $(X_j,X_{i-1})$ is an index pair for $M_{ij}$ as $M_{ij}$ consists of all critical points $x$ such that $f(x)\in\{c_i,\ldots, c_j\}$ and all the orbits connecting them.
\end{proof}
\noindent
If we now apply Lemma \ref{lem:cupLengthProperty} $k$ times, we get

\begin{align}\label{clinequ}
\CL(\Omega;X,A) \leq \sum\limits_{i=1}^k \CL(\Omega;X_i,X_{i-1}). 
\end{align}
On the other hand, $(X_i,X_{i-1})$ is an index pair for $M_{ii}$, which is a set consisting of a finite number of stationary points. Therefore we can choose an isolating neighbourhood $\Omega_i$ for $M_{ii}$, where $\Omega_i$ is a disjoint union of discs. If now $(X_i',X_{i-1}')$ is an index pair for $M_{ii}$ such that $X_i' \subset  \Omega_i$, then by Corollary \ref{cor-moduleiso} and Lemma \ref{lem:cupLengthProperty2} 
\[\CL(\Omega;X_i,X_{i-1}) = \CL(\Omega;X_i',X_{i-1}') \leq \CL(\Omega_i;X'_i,X'_{i-1}) \leq 1\]
where the last inequality follows from the fact that the groups $H_0^{q>0}(\Omega_i)$ are trivial. Hence, by \eqref{clinequ},

\[\CL(\Omega;X,A)\leq k\]
and Theorem \ref{thm-estimate} is shown, as $k$ is the number of critical values of $f$ in $\Omega$.


\subsection{Proof of Proposition \ref{prop-bounded}}
We note at first the following simple lemma.

\begin{lemma}\label{lem:compactZeosFamily}
Let $\Lambda$ be a compact space of parameters and let $\{F_\lambda\ = L + K_\lambda\}_{\lambda \in \Lambda}$ be a continuous family of $\ls$-vector fields $F_\lambda: U\subset E\rightarrow E$. 
If $\bigcup\limits_{\lambda \in \Lambda} F^{-1}_\lambda(0)\subset U$ is bounded, then it is compact.
\end{lemma}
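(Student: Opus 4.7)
The plan is to prove compactness of $S := \bigcup_{\lambda\in\Lambda}F^{-1}_\lambda(0)$, which is bounded by hypothesis, by establishing sequential compactness directly: every sequence in $S$ has a subsequence converging to an element of $S$. The starting observation is that the equation $Lx+K_\lambda(x)=0$ can be rewritten, using the invertibility of $L$, as the fixed point identity
\[x=-L^{-1}K_\lambda(x),\]
which transfers the compactness carried by the nonlinear term $K_\lambda$ into compactness of the zero set.

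Given $\{x_n\}\subset S$, I would first pick $\lambda_n\in\Lambda$ with $F_{\lambda_n}(x_n)=0$ and, using compactness of $\Lambda$, pass to a subsequence with $\lambda_n\to\lambda_0\in\Lambda$. Next I would invoke the fact that for a continuous family of compact nonlinearities over a compact parameter space, the joint map $(\lambda,x)\mapsto K_\lambda(x)$ sends bounded subsets of $\Lambda\times U$ to relatively compact subsets of $E$. Since $\{x_n\}$ is bounded, a further subsequence gives $K_{\lambda_n}(x_n)\to y$ for some $y\in E$, whence $x_n=-L^{-1}K_{\lambda_n}(x_n)\to -L^{-1}y=:x_0$ by continuity of $L^{-1}$. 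Passing to the limit in $Lx_n+K_{\lambda_n}(x_n)=0$ using the joint continuity of $K$ then yields $Lx_0+K_{\lambda_0}(x_0)=0$, so $x_0\in F^{-1}_{\lambda_0}(0)\subset S$. This gives sequential compactness and hence compactness of $S$.

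The main obstacle I expect is the step invoking joint compactness of $\{K_\lambda\}_{\lambda\in\Lambda}$ on bounded sets. If this property is built into the paper's notion of a continuous family of $\ls$-vector fields (as is typical in the Izydorek-style setup), it is immediate; otherwise one would have to deduce it from joint continuity of $K$ together with separate compactness of each $K_\lambda$. The standard route is to approximate $K_{\lambda_n}(x_n)$ by $K_{\lambda_0}(x_n)$, extract a convergent subsequence of the latter via compactness of $K_{\lambda_0}$, and then show $\|K_{\lambda_n}(x_n)-K_{\lambda_0}(x_n)\|\to 0$ by a uniformity-in-$\lambda$ argument on the bounded set containing $\{x_n\}$. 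A minor auxiliary point is to ensure that the limit $x_0$ lies in $U$, which is unproblematic under the mild tacit hypothesis that $\overline{S}\subset U$, which is the usual setting for the applications considered later in the paper.
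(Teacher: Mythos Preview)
Your argument is correct and follows essentially the same route as the paper: rewrite $F_\lambda(x)=0$ so that $x$ equals a compact operator applied to $x$, and use this to extract a convergent subsequence from any sequence in $S$. The only cosmetic difference is that the paper, instead of using $L^{-1}$ directly, invokes the Fredholm property of $L$ to obtain $T$ with $TL=I_E+C$ for some compact $C$, and then writes $x_i=-(C+TK_{\lambda_i})(x_i)$; since the paper's own definition of an $\ls$-vector field assumes $L$ is invertible, your use of $L^{-1}$ is actually the cleaner version of the same step. Your extra care in verifying that the limit point lies in $S$ (rather than just asserting closedness) and your discussion of the joint-compactness issue for $\{K_\lambda\}$ are both appropriate and do not deviate from the paper's line of argument.
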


\begin{proof}
Let $\{x_i\}_{i\in\mathbb{N}}$ be a sequence in $\bigcup\limits_{\lambda \in \Lambda} F^{-1}_\lambda(0)$. As $L$ is Fredholm, there exists a linear bounded operator $T:E\rightarrow E$ and a linear compact operator $C:E\rightarrow E$ such that $TL=I_E+C$. Hence

\[0=TF(x_i)=x_i+Cx_i+TK_{\lambda_i}(x_i),\quad i\in\mathbb{N},\]
and so $x_i=-(C+TK_{\lambda_i})(x_i)$. As $C+TK_{\lambda_i}$ and $\Lambda$ are compact, we see that $\{x_i\}_{i\in\mathbb{N}}$ contains a convergent subsequence. Consequently, we have shown that $\bigcup\limits_{\lambda \in \Lambda} F^{-1}_\lambda(0)$ is relatively compact, and as it is closed, we see that it is compact. 
\end{proof}
\noindent
Our proof of Proposition \ref{prop-bounded} now follows standard arguments in Morse theory. The reader may compare our argument with the compactness proof in \cite[pp. 56-57]{Schwarz}.\\
By assumption \wbd, there exists $\epsilon > 0$ such that $Y:=F^{-1}(B(\epsilon))\cap\Omega$ is bounded. Suppose that $r_0>0$ is such that $Y\subset B(r_0)$. Now let $X:=\inv(\Omega,\eta)\subset U$ be the invariant set for the negative gradient flow of $f$ which is given by all critical points of $f$ and the flow lines between them. For proving Proposition \ref{prop-bounded}, we need to show that it is bounded. As this is certainly true if $X\subset B(r_0)$, we can assume that $X\setminus B(r_0)\neq\emptyset$ and consider a point $x\in X$ such that $x\not\in B(r_0)$. Let now $u:\mathbb{R}\rightarrow E$ be the trajectory of the flow starting at $x$, and let $y \in \omega(x) \subset \crit(f)$. Note that $y\in Y\subset B(r_0)$ as $F(y)=0$.\\
Let $t_0\in(-\infty,0)$ such that $u(t_0)\in\partial B(r_0)$ but $u(t)\not\in\partial B(r_0)$ for all $t\in(t_0,0)$. Then

\[\|x-y\|\leq \|y-u(t_0)\|+\|u(t_0)-x\|\leq 2r_0+\int^0_{t_0}{|\dot{u}(s)|ds}\]
and we see that we need to find a bound on $\int^0_{t_0}{|\dot{u}(s)|ds}$ which is independent of $u$. If we set $l(s)=\int^s_{t_0}{|\dot{u}(s)|ds}$, then

\begin{align*}
\frac{dl}{ds}(s)=|\dot{u}(s)| &= |\nabla f(u(s))|,\\
\frac{d(f(u(s)))}{ds}(s)& = - |\nabla f(u(s))|^2
\end{align*}
and so

\[\frac{dl}{ds}(s) \leq - \frac{1}{\epsilon}\frac{d(f(u(s)))}{ds}(s)\]
for every $s\in(t_0,0]$, where we use that $u(s)\notin B(r_0)\supset Y$ and so $|\nabla f(u(s))|\geq\varepsilon$ for these values of $s$. Therefore

\begin{align*}
\|x-y\|& \leq  2r_0 + \int_{t_0}^0 |\dot{u}(s)| \, ds = 2r_0 + \int_{t_0}^0 \frac{dl}{ds}(s) \, ds \\
    &\leq 2r_0 - \frac{1}{\epsilon}\int_{t_0}^0  \frac{d(f(u(s)))}{ds}(s) \, ds =  2 r_0 + \frac{1}{\epsilon}[f(u(t_0)) - f(u(0))].
\end{align*}
As $f$ decreases along flow lines, 

\[2r_0 +\frac{1}{\varepsilon} [f(u(t_0)) - f(u(0))]\leq 2r_0+\frac{1}{\epsilon}[f(z)-f(y)],\]
where $z\in\alpha(x)$. As the set of critical points of $f$ in $\Omega$ is a subset of the bounded set $Y$, it is compact by Lemma \ref{lem:compactZeosFamily}, and so there exists $r_1>0$ such that

\[r_1 \geq \frac{1}{\epsilon}|f(x_1) - f(x_2)|\]
for all critical points $x_1$ and $x_2$ of $f$ in $\Omega$. Hence $X\subset B(0,R)$ for $R:=2r_0+r_1$ which shows that $X$ is bounded as $r_0$ and $r_1$ were chosen independently of $x\in X$.\\
To see that the second part of the Proposition is true, note that if we have a continuous family parametrised by a compact space $\Lambda$, then by the definition of \wbd-homotopy and Lemma \ref{lem:compactZeosFamily}, $r_0$ and $r_1$ can be chosen independently of $\lambda\in\Lambda$.




\thebibliography{9999999}

\bibitem[Ab97]{Alberto} A. Abbondandolo, \textbf{A new cohomology for the Morse theory of strongly indefinite functionals on Hilbert space}, Topol. Methods Nonlinear Anal. \textbf{9}, 1997, 325--382

\bibitem[BF04]{Bauer} S. Bauer, M. Furuta, \textbf{A stable cohomotopy refinement of Seiberg-Witten invariants: I}, Invent. Math. \textbf{155}, 2004, 1--19

\bibitem[CZ83]{ConleyZehnder} C.C. Conley, E. Zehnder, \textbf{The Birkhoff-Lewis fixed point theorem and a conjecture of V. I. Arnol'd}, Invent. Math. \textbf{73}, 1983, 33--49

\bibitem[DGU11]{Uss} Z. Dzedzej, K. G\c{e}ba, W. Uss, \textbf{The Conley index, cup-length and bifurcation}, J. Fixed Point Theory Appl. \textbf{10}, 2011, 233--252

\bibitem[GG73]{Geba} K. G\c{e}ba, A. Granas, \textbf{Infinite Dimensional Cohomology Theories}, J. Math. Pures Appl. \textbf{52}, 1973, 145--270 

\bibitem[GIP99]{Marek} K. G\c{e}ba, M. Izydorek, A. Pruszko, \textbf{The Conley index in Hilbert spaces and its applications}, Studia Math. \textbf{134}, 1999, 217--233

\bibitem[HZ94]{Hofer} H. Hofer, E. Zehnder, \textbf{Symplectic Invariants and Hamiltonian Dynamics}, Birkhäuser Advanced Texts: Basel Textbooks, 1994

\bibitem[IRSSV]{Invariance} M. Izydorek, T.O. Rot, M. Starostka, M. Styborski, R.C.A.M. Vandervorst, 
	    \textbf{Homotopy invariance of the Conley index and local Morse homology in Hilbert spaces}, 
	    accepted for publication by J. Differential Equation, arXiv:1612.05524, 2016

\bibitem[Man03]{Manolescu} C. Manolescu, \textbf{Seiberg-Witten-Floer stable homotopy type of three-manifolds with $b_1= 0$}, Geom. Topol. \textbf{7}, 2003, 889--932

\bibitem[Oh90]{Oh} Y.-G. Oh, \textbf{A symplectic fixed point theorem on $T^{2n} \times \mathbb{C}P^k$}, Math. Z. \textbf{203}, 1990, 535--552

\bibitem[Ra86] {Rabinowitz} P. H. Rabinowitz, \textbf{Minimax methods in critical point theory with applications to differential equations},
No. 65. American Mathematical Soc., 1986

\bibitem[Sch93]{Schwarz} M. Schwarz, \textbf{Morse homology}, Progress in Mathematics \textbf{111}, Birkhäuser Verlag, 1993

\bibitem[Sp66]{Spanier} E.H. Spanier, \textbf{Algebraic topology}, McGraw-Hill Book Co., New York-Toronto, Ont.-London,  1966

\bibitem[St15]{Maciej} M. Starostka, \textbf{Morse cohomology in a Hilbert space via the Conley Index}, J. Fixed Point Theory Appl. \textbf{17}, 2015, 1--14

\bibitem[Sty09]{Marcin} M. Styborski, \textbf{Conley Index in Hilbert Spaces and the Leray-Schauder Degree}, Topol. Methods Nonlin. Anal. \textbf{33}, 2009, 131--148

\bibitem[tD08]{Tammo} T. tom Dieck, \textbf{Algebraic topology}, EMS Textbooks in Mathematics, European Mathematical Society, 2008


\begin{minipage}{1.0\textwidth}
\begin{minipage}{0.4\textwidth}
Maciej Starostka\\
Polish Academy of Sciences\\
and\\
Gdansk University of Technology\\
Poland\\
E-mail: maciejstarostka@impan.pl
\end{minipage}
\hfill
\begin{minipage}{0.4\textwidth}
Nils Waterstraat\\
School of Mathematics,\\
Statistics \& Actuarial Science\\
University of Kent\\
Sibson Building\\
Parkwood Road\\
Canterbury\\
Kent CT2 7FS\\
UNITED KINGDOM\\
E-mail: n.waterstraat@kent.ac.uk
\end{minipage}
\end{minipage}

%

\end{document}